\newcommand{\ignore}[1]{}
\newtheorem{theorem}{Theorem}[section]
\newtheorem{lemma}[theorem]{Lemma}
\newtheorem{claim}[theorem]{Claim}
\newcommand{\Proof}[1]
        {
        \noindent
        \emph{Proof #1.}~
        }
\newsavebox{\smallProofsym}                     
\newcommand{\smalleop}[1]
        {
        \mbox{} \hfill #1~~\usebox{\smallProofsym}\!\!\!\!\!\!\
        }
\newcommand{\parag}[1]{\vspace{2mm}

\noindent{\bf #1} }
\newcommand{\ZZ}{\ensuremath{\mathbb Z}}
\newcommand{\RR}{\ensuremath{\mathbb R}}
\newcommand{\pts}{\mathcal P}
\newcommand{\lines}{\mathcal L}
\newcommand{\GCD}{\mathrm{GCD}}
\def\eps{{\varepsilon}}
\begin{document}
\pagenumbering{arabic}

\title{A structural Szemer\'edi--Trotter Theorem for Cartesian Products\thanks{This research project was done as part of the 2020 NYC Discrete Math REU, supported by NSF awards DMS-1802059, DMS-1851420, DMS-1953141, and DMS-2028892.}}

\author{
Adam Sheffer\thanks{Department of Mathematics, Baruch College, City University of New York, NY, USA.
{\sl adamsh@gmail.com}. Supported by NSF award DMS-1802059 and by PSCCUNY award 63580.}
\and
Olivine Silier\thanks{California Institute of Technology, Pasadena, CA 91125.
{\sl osilier@caltech.edu.} Supported by the Lynn A. Booth and Kent Kresa SURF Fellowship}}

\date{}

\maketitle

\begin{abstract}
We study configurations of $n$ points and $n$ lines that form $\Theta(n^{4/3})$ incidences, when the point set is a Cartesian product. 
We prove structural properties of such configurations, such that there exist many families of parallel lines or many families of concurrent lines. 
We show that the line slopes have multiplicative structure or that many sets of $y$-intercepts have additive structure.
We introduce the first infinite family of configurations with $\Theta(n^{4/3})$ incidences.
We also derive a new variant of a different structural point--line result of Elekes.

Our techniques are based on the concept of line energy. 
Recently, Rudnev and Shkredov introduced this energy and showed how it is connected to point--line incidences. 
We also prove that their bound is tight up to sub-polynomial factors. 
\end{abstract}

\section{Introduction}

The Szemer\'edi--Trotter theorem \cite{ST83} is a central result in discrete geometry. 
The many variants and generalizations of this theorem are now considered as an entire subfield. 
The Szemer\'edi-Trotter theorem and its variants have a wide range of applications, in combinatorics, harmonic analysis, theoretical computer science, number theory, model theory, and more (for example, see \cite{bombieri2015problem,BD15,CGS16,katz2019improved,singer2021point}).
It is thus awkward that hardly anything is known about the cases where this theorem is asymptotically tight.  

Let $\pts$ be a set of points and let $\lines$ be a set of lines, both in $\RR^2$.
An \emph{incidence} is a pair $(p,\ell)\in \pts\times\lines$, such that the point $p$ is on the line $\ell$.
We denote the number of incidences in $\pts\times\lines$ as $I(\pts,\lines)$.
The Szemer\'edi--Trotter theorem provides an asymptotically tight upper bound for $I(\pts,\lines)$.

\begin{theorem}{\bf (Szemer\'edi and Trotter)} \label{th:SzemTrot}
Let $\pts$ be a set of $m$ points and let $\lines$ be a set of $n$ lines, both in $\RR^2$.
Then
\[ I(\pts,\lines) = O(m^{2/3}n^{2/3}+m+n). \]
\end{theorem}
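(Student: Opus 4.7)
The plan is to use Székely's elegant crossing-number argument, which I find cleaner than the original cell-decomposition proof. The idea is to encode the incidence structure as a drawing of a graph in the plane, so that the number of incidences controls the number of edges while the number of lines controls the number of crossings; then the crossing lemma forces the two quantities to balance.

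First I would build the graph $G = (V,E)$ as follows. Let $V = \pts$. For each line $\ell \in \lines$, let $k_\ell$ denote the number of points of $\pts$ lying on $\ell$; order these points along $\ell$ and insert an edge between every pair of consecutive ones. This contributes $\max(k_\ell - 1, 0)$ edges, drawn as segments of $\ell$. Summing gives
\[ |E| \;=\; \sum_{\ell \in \lines} \max(k_\ell - 1, 0) \;\geq\; I(\pts, \lines) - n. \]
Any two edges of $G$ that cross must lie on distinct lines, and two distinct lines meet in at most one point, so the number of crossings in this drawing is at most $\binom{n}{2} \leq n^2/2$.

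Next I would invoke the crossing lemma: any graph with $v$ vertices and $e \geq 4v$ edges has crossing number at least $e^3 / (64 v^2)$. Applying this to $G$ in the case $|E| \geq 4m$, I obtain
\[ \frac{(I(\pts,\lines) - n)^3}{64 \, m^2} \;\leq\; \frac{n^2}{2}, \]
which rearranges to $I(\pts,\lines) \leq n + (32)^{1/3} \, m^{2/3} n^{2/3} = O(m^{2/3} n^{2/3} + n)$. In the alternative case $|E| < 4m$, I get directly $I(\pts, \lines) < n + 4m$. Either way the claimed bound $O(m^{2/3} n^{2/3} + m + n)$ holds.

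The main subtlety is really the crossing lemma itself, which I would either cite or derive in a sentence from Euler's formula via the standard probabilistic argument (delete each vertex independently with a well-chosen probability $p$ and apply the bound $\text{cr}(H) \geq e(H) - 3 v(H)$ to the random subgraph). A secondary technical point is ensuring $G$ is \emph{simple}, which I would handle by noting that parallel or coincident edges along the same line are automatically collapsed by the ``consecutive points'' construction, and any multi-edge coming from different lines can be bounded by trivial incidence counts absorbed into the $m+n$ term.
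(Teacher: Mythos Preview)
The paper does not actually prove Theorem~\ref{th:SzemTrot}; it is stated as the classical Szemer\'edi--Trotter theorem and cited from \cite{ST83}, so there is no ``paper's own proof'' to compare against. Your Sz\'ekely crossing-number argument is correct and is the standard modern proof of this result.

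One minor simplification: your worry about multi-edges is unnecessary. If two distinct points of $\pts$ were joined by edges coming from two different lines $\ell_1,\ell_2\in\lines$, then both points would lie on $\ell_1\cap\ell_2$, forcing $\ell_1=\ell_2$. Hence $G$ is automatically simple, and no separate bookkeeping is needed to absorb multi-edges into the $m+n$ term.
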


The \emph{structural} Szemer\'edi--Trotter problem asks to characterize the point--line configurations that have an asymptotically maximal number of incidences.  
We focus on sets of $n$ points and $n$ lines that form $\Theta(n^{4/3})$ incidences.
Most of our results can be extended to $m$ point and $n$ lines.

Around the middle of the 20th century, Erd\H os \cite{erd86} discovered a configuration of $n$ points and $n$ lines that form 
$\Theta(n^{4/3})$ incidences. 
In this construction, the point set is a $\sqrt{n}\times\sqrt{n}$ section of the integer lattice $\ZZ^2$.
In the early 2000s, Elekes \cite{Elek02} discovered a simpler configuration of $n$ points and $n$ lines that form 
$\Theta(n^{4/3})$ incidences.
In Elekes's configuration, the point set is an $n^{1/3}\times n^{2/3}$ section of the integer lattice $\ZZ^2$.

To obtain more point-line configurations, we can take one of the above configurations and perform transformations that preserve most of the incidences. 
We can apply projective transformations, point--line duality, remove some points and lines, and add additional points and lines. 
Up to such transformations, the only known configurations with $\Theta(n^{4/3})$ incidences were Erd\H os's and Elekes's.
This led to questions such as\begin{itemize}[noitemsep,topsep=1pt]
\item Are these the only two configurations? 
\item If there are more configurations, are there only sporadic configurations? Or is there an infinite family of configurations with continuous parameters?
\item Up to the above transformations, is the point set always a lattice? 
\item What properties must the set of lines satisfy?
\end{itemize}

Hardly anything is known about such structural questions.
The only non-trivial result that we are aware of is by Solymosi \cite{Solymosi06}.

\begin{theorem} \label{th:Soly}{\bf (Solymosi)}
For every constant integer $k$, the following holds for every sufficiently large $n$. Let $\pts$ be a set of $n$ points and let $\lines$ be a set of $n$ lines, both in $\RR^2$, such that $I(\pts,\lines)=\Theta(n^{4/3})$. Then there exists a set of $k$ of the points, no three on a line, such that there is a line of $\lines$ passing through each of the $\binom{k}{2}$ point pairs.
\end{theorem}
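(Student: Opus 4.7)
The plan is to translate the statement into a combinatorial problem about an auxiliary graph. Define $G$ on vertex set $\pts$ by putting an edge between $p$ and $q$ whenever the line through them belongs to $\lines$. Each $\ell \in \lines$ with $m_\ell = |\pts \cap \ell|$ points of $\pts$ on it contributes $\binom{m_\ell}{2}$ edges to $G$, so by convexity applied to $\sum_\ell m_\ell = I(\pts,\lines) = \Theta(n^{4/3})$ we obtain $|E(G)| = \Omega(n^{5/3})$. The conclusion of the theorem is equivalent to finding in $G$ a copy of $K_k$ whose $\binom{k}{2}$ edges all come from different lines of $\lines$; call such a clique a \emph{linear} $K_k$. (Equivalently, the $k$ points are in general position and pairwise joined by a line of $\lines$.)

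I would attempt to find a linear $K_k$ by induction on $k$. The base $k=2$ is immediate: any edge of $G$ works. Given a linear $K_{k-1}$ on $\{p_1,\ldots,p_{k-1}\}$, I would look for an extension $p_k$ in the common neighborhood $N = \bigcap_{i<k} N_G(p_i)$, subject to avoiding (i) each of the $\binom{k-1}{2}$ lines of $\lines$ already used, and (ii) every line $\ell \in \lines$ that already contains two of the $p_i$'s (to preserve general position in the enlarged configuration). Because $k$ is a constant, this forbids only $O(k^2)$ lines; since the typical rich line of $\lines$ carries $O(n^{1/3})$ points of $\pts$, the forbidden portion of $N$ is polynomially smaller than $n$. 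The step therefore succeeds as long as $|N|$ can be kept polynomially larger than $n^{1/3}$.

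The main obstacle is maintaining the lower bound on $|N|$ through $k$ iterations. A simple degree average gives $|N| \geq n^{2/3}$ at the first step, but appears to shrink by a factor of $n^{1/3}$ each subsequent step, failing after only a few rounds. To overcome this, I would preprocess with a dependent-random-choice-style argument: choose a random constant-size anchor $T\subset\pts$ and bound $\EE\bigl[|N(T)|\bigr]$ by a moment computation keyed off $|E(G)| = \Omega(n^{5/3})$, producing an anchor whose common neighborhood is polynomially large and whose induced subgraph in $G$ inherits a positive fraction of the edge density of $G$ itself. Inside this dense substructure, each subsequent inductive step loses only a constant factor rather than a polynomial one, so the induction runs for any constant $k$.

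A secondary difficulty is ruling out ``degenerate'' $K_k$'s in which all $k$ vertices sit on a single rich line of $\lines$; these are the $K_k$'s one most easily counts but they are not in general position. I would handle this with a double count over ordered pairs of lines $(\ell_1,\ell_2) \in \lines^2$ of the number of point pairs $(p,q) \in (\pts\cap\ell_1)\times(\pts\cap\ell_2)$ whose connecting line also lies in $\lines$; an averaging argument then locates a pair of lines for which the linear (non-degenerate) $K_k$'s dominate, and iterating produces the desired configuration. Balancing this non-degeneracy count against the common-neighborhood shrinkage is the crux of the argument, and is the step where I expect the delicate work to lie.
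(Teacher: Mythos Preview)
The paper does not prove Theorem~\ref{th:Soly}; it is quoted from Solymosi's paper as background and no argument is given. So there is no proof in this paper to compare your proposal against.

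On the merits of your sketch, there is a genuine gap at the dependent-random-choice step. Your graph $G$ has $n$ vertices and edge density $\Theta(n^{-1/3})$. For a random anchor $T$ of size $t$, the moment computation you invoke gives
\[
\EE\bigl[|N(T)|\bigr]=\sum_{v}\Bigl(\frac{d(v)}{n}\Bigr)^{t}\ge n\Bigl(\frac{2|E(G)|}{n^{2}}\Bigr)^{t}=\Theta\bigl(n^{1-t/3}\bigr),
\]
and in the standard extremal configurations (Erd\H os's or Elekes's lattices) the degrees are essentially uniform at $\Theta(n^{2/3})$, so this lower bound is tight: for $t\ge 3$ the expected common neighborhood is $O(1)$. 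Thus DRC buys you at most two rounds before the common neighborhood collapses, and your assertion that ``each subsequent inductive step loses only a constant factor rather than a polynomial one'' is not supported by the computation you describe. Passing to the induced subgraph on $N(T)$ does not help either: even if it inherits density $\Theta(n^{-1/3})$, you face the identical obstruction inside it. To reach every constant $k$ one must exploit the specific incidence structure (that $G$ is a union of $n$ cliques coming from the lines, with the Szemer\'edi--Trotter constraints this entails) rather than treat $G$ as a generic graph of density $n^{-1/3}$; Solymosi's actual proof does this via an iterated counting argument on the incidence structure. Your handling of the ``secondary difficulty'' (avoiding collinear $K_k$'s) is also where the real work lies, and the proposed double count over line pairs is too vague to assess as stated.
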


A recent result of Mirzaei and Suk \cite{MirzaeiSuk21} can be seen as quantitative variant of Theorem \ref{th:Soly}, for specific types of subgraphs.
A recent result of Hanson, Roche-Newton, and Zhelezov \cite[Theorem 1.8]{HROZ20} considers the case where the point set is $A\times A$, where $A$ is a set of rational numbers with a very small product set.
They show that, in this case, the number of incidences is significantly smaller than $n^{4/3}$. 

\parag{Our structural results.}
In this work, we prove several results for the structural Szemer\'edi--Trotter problem.
First, we provide the first infinite family of point--line configurations with $\Theta(n^{4/3})$ incidences. 
The constructions of Erd\H os and Elekes are the two extreme cases of this family.

\begin{theorem} \label{th:FamilyConst}
The following holds for every $1/3<\alpha<1/2$.
Let $\pts$ be a section of the integer lattice $\ZZ^2$ of size $n^\alpha \times n^{1-\alpha}$.
Then there exists a set $\lines$ of $n$ lines such that 
\[ I(\pts,\lines) = \Theta(n^{4/3}). \]
\end{theorem}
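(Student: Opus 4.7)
The plan is to interpolate between the Erd\H os ($\alpha=1/2$) and Elekes ($\alpha=1/3$) constructions by taking rational slopes $p/q$ whose numerator and denominator ranges are calibrated to $\alpha$. I would write $X=\lfloor n^\alpha\rfloor$, $Y=\lfloor n^{1-\alpha}\rfloor$ for the grid dimensions, set the target number of lattice points per line to $k=\lceil n^{1/3}\rceil$, and choose $P=\lfloor c\,n^{2/3-\alpha}\rfloor$, $Q=\lfloor c\,n^{\alpha-1/3}\rfloor$ for a sufficiently small absolute constant $c>0$. The hypothesis $1/3<\alpha<1/2$ ensures $P,Q\to\infty$ and $PQ=\Theta(n^{1/3})$. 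My candidate $\lines$ is the set of all lines of slope $p/q$ with $\gcd(p,q)=1$, $1\le p\le P$, $1\le q\le Q$, that meet $\pts$ in at least $k$ points; after the counting below I would prune or pad to exactly $n$ lines without changing the asymptotics.

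The key step is to show $|\lines|=\Theta(n)$. For each coprime $(p,q)$ in the allowed range, every line of slope $p/q$ through lattice points of the $X\times Y$ grid has a unique leftmost--lowest lattice point $(x_0,y_0)\in\pts$ characterized by $(x_0-q,y_0-p)\notin\pts$; inclusion--exclusion on the left strip $x_0<q$ and the bottom strip $y_0<p$ gives $pX+qY-pq$ candidate lines per direction. Choosing $c$ small enough that $(k-1)p\le Y/2$ and $(k-1)q\le X/2$ forces a fixed positive fraction of these candidates to contain at least $k$ lattice points of $\pts$. Summing over coprime pairs via the $6/\pi^2$-density estimate yields
\[
|\lines|\;=\;\Theta\!\bigl(X\cdot P^2Q\;+\;Y\cdot PQ^2\bigr)\;=\;\Theta(n).
\]
Since each line of $\lines$ contributes at least $k=\Theta(n^{1/3})$ incidences by construction, $I(\pts,\lines)\ge|\lines|\cdot k=\Omega(n^{4/3})$, and Theorem~\ref{th:SzemTrot} supplies the matching $O(n^{4/3})$ upper bound.

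The main obstacle I anticipate is the constant-factor bookkeeping: the absolute constant $c$ must be small enough that boundary starting points still yield $k$-rich lines, yet large enough that $|\lines|$ exceeds $n$ so that trimming is possible. Invoking the $6/\pi^2$ density requires mild lower bounds on $P$ and $Q$, so the limits $\alpha\to 1/3^+$ and $\alpha\to 1/2^-$, in which $Q$ or $P$ drops to a constant, should be handled separately; in those limits the construction should specialize back to Elekes's and Erd\H os's original configurations, and the argument should be verified to degrade gracefully. A minor auxiliary point is that, after the trim to exactly $n$ lines, the lower bound on incidences is preserved because each surviving line individually carries $\ge k$ incidences.
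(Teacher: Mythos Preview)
Your construction is correct and rests on the same core idea as the paper's: take lines with rational slopes whose reduced denominator is at most $n^{\alpha-1/3}$, and count via totient/M\"obius density. The execution differs in two respects. First, the paper parameterizes slopes as $a+b/c$ with $0\le a<n^{1-2\alpha}/4$ and $1\le b<c\le n^{\alpha-1/3}$, writes each line explicitly as $y=(a+b/c)(x-i)+d$ for $0\le i<c$ and $0\le d<n^{1-\alpha}/4$, shows directly that such a line meets $\pts$ in $\Theta(n^\alpha/c)$ points, and sums over $c$ using Lemma~\ref{le:totient} to obtain $I(\pts,\lines)=\Theta(n^{4/3})$ without ever invoking Theorem~\ref{th:SzemTrot}. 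You instead impose a uniform richness threshold $k\approx n^{1/3}$, count $k$-rich lines per slope by the leftmost-lowest-point argument, and use Szemer\'edi--Trotter for the matching upper bound. Second, your slope set (all reduced $p/q$ with $p\le P$, $q\le Q$) is strictly larger than the paper's (where the slope itself is bounded by roughly $n^{1-2\alpha}/4$), though the extra slopes do no harm to the count. Both routes work; the paper's is self-contained, while yours is shorter once Theorem~\ref{th:SzemTrot} is on the table. Your worry about the constant $c$ is unnecessary: padding with arbitrary lines preserves the incidence lower bound, so $c$ only needs to be small, not also large.
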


For a proof of Theorem \ref{th:FamilyConst} and additional details, see Section \ref{sec:constructions}. 
This family of constructions answers some structural questions and leads to new ones.
For example, in all of the constructions of this family, there are $\Theta(n^{1/3})$ families of $\Theta(n^{2/3})$ parallel lines. 
We wonder whether there exist configurations with $\Theta(n^{4/3})$ incidences that do not have this property, possibly after applying some transformations. 

A follow-up work by Larry Guth and the second author \cite{GuthSilier21} provides constructions where the point set is a Cartesian product of generalized arithmetic progressions.  
This prompts the question: Do all configurations consist of a Cartesian product of sets with a small sum set? For the definition of a sum set and more information, see Section \ref{sec:Energies}.  

We derive properties of the set of lines in the case where the point set is a Cartesian product.
Part (b) of the this result relies on additive and multiplicative energies.
For a definition of these energies, see Section \ref{sec:Energies}.

\begin{theorem} \label{th:StructuralSzemTrot} $\qquad$\\
(a) For $1/3<\alpha< 1/2$, let $A,B\subset \RR$ satisfy $|A|=n^\alpha$ and $|B|=n^{1-\alpha}$.
Let $\lines$ be a set of $n$ lines in $\RR^2$, such that $I(A\times B,\lines) = \Theta(n^{4/3})$.
Then at least one of the following holds:
\begin{itemize}
\item There exists $1-2\alpha\le \beta \le 2/3$ such that $\lines$ contains $\Omega(n^{1-\beta}/\log n)$ families of $\Theta(n^\beta)$ parallel lines, each with a different slope.
\item There exists $1-\alpha\le \gamma \le 2/3$ such that $\lines$ contains $\Omega(n^{1-\gamma}/\log n)$ disjoint families of $\Theta(n^\gamma)$ concurrent lines.
\end{itemize}
(b) Assume that we are in the case of $\Omega(n^{1-\beta}/\log n)$ families of $\Theta(n^\beta)$ parallel lines. 
There exists $n^{2\beta} \le t \le n^{3\beta}$ such that, for $\Omega(n^{1-\beta}/\log^2 n)$ of these families, the additive energy of the $y$-intercepts is $\Theta(t)$.
Let $S$ be the set of slopes of these families. 
Then
\[ E^{\times} (S) \cdot t = \Omega(n^{3-\alpha}/ \log^{12}n). \]
\end{theorem}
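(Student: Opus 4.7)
\medskip\noindent\textbf{Proof proposal.}
The plan is to prove (a) by a sequence of dyadic pigeonholes that extracts a subfamily of rich lines and case-splits on whether they cluster by slope or by concurrency point, and to prove (b) by combining the parallel output of (a) with the Rudnev--Shkredov line-energy inequality applied to the Cartesian-product setting.

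For (a), since $I(A\times B,\lines) = \Theta(n^{4/3})$, a dyadic pigeonhole over richness extracts a parameter $k = 2^j$ and a subset $\lines_k \subseteq \lines$ with each line in $\lines_k$ incident to $\Theta(k)$ points of $A \times B$ and with $|\lines_k| \cdot k = \Theta(n^{4/3}/\log n)$. Partitioning $\lines_k$ by slope and pigeonholing the slope-multiplicities, one either extracts a set of $\Omega(n^{1-\beta}/\log n)$ slopes each carrying $\Theta(n^\beta)$ rich lines (the parallel case), or, if the slopes are too spread out to dominate, one performs a dual pigeonhole on the intersection points of $\lines_k$ to obtain $\Omega(n^{1-\gamma}/\log n)$ points each meeting $\Theta(n^\gamma)$ rich lines (the concurrent case). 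The ranges $1-2\alpha \le \beta \le 2/3$ and $1-\alpha \le \gamma \le 2/3$ come from intrinsic constraints on $A\times B$: a line meets $A\times B$ in at most $n^\alpha$ points, and at most $|B - sA|\le n$ parallel translates of a given slope $s$ meet $A\times B$.

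For (b), start from the parallel output: $\Omega(n^{1-\beta}/\log n)$ slopes $s \in S$, each with a family of $\Theta(n^\beta)$ rich parallel lines whose intercept set $B_s$ has size $\Theta(n^\beta)$. Apply a third dyadic pigeonhole to the additive energy $E^+(B_s)$, which lies in the trivial range $[|B_s|^2, |B_s|^3] = [n^{2\beta}, n^{3\beta}]$, to isolate a value $t$ and a subset of $\Omega(n^{1-\beta}/\log^2 n)$ slopes with $E^+(B_s) = \Theta(t)$. The key algebraic step is then to factorize the line energy of the surviving parallel-structured lines: a four-line coincidence at a common point, under the $(s,b)$ parameterization, decomposes into a multiplicative coincidence among four slopes from $S$ together with an additive coincidence among four intercepts from $B_s$, yielding a bound of the form (line energy) $= O(E^\times(S) \cdot t)$. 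Plugging this into the Rudnev--Shkredov energy-incidence inequality in its Cartesian-product form $I(A\times B,\lines)^3 \lesssim (\text{line energy}) \cdot |A|^2 |B| \cdot (\log n)^{O(1)}$, and using $I = \Theta(n^{4/3})$ together with $|A|^2|B| = n^{1+\alpha}$, rearranges to $E^\times(S)\cdot t = \Omega(n^{3-\alpha}/\log^{12} n)$, with $\log^{12}$ absorbing the three dyadic losses cubed.

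The hardest step is the dichotomy in (a): a bare dyadic decomposition produces rich lines but does not by itself yield slope- or point-clustering, and one must rule out the spread-out scenario where many rich lines of distinct slopes hit largely unrelated points. I expect this to require a bipartite Szemer\'edi--Trotter-type estimate applied to $\lines_k$ combined with a careful double-counting of triples (line, line, intersection point), leveraging the Cartesian-product hypothesis to force a common slope or common point. Once the dichotomy is in hand, part (b) is nearly mechanical; its subtlety lies almost entirely in tracking the logarithmic losses through three nested pigeonholes and the cubing of $I$.
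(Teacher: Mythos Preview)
Your plan for part~(b) is essentially correct and matches the paper: a dyadic pigeonhole on $E^+(B_s)$ isolates $t$, a Cauchy--Schwarz argument factorizes the line energy of the surviving family as $O(E^\times(S)\cdot t)$, and the Rudnev--Shkredov inequality (Theorem~\ref{th:IncidencesViaLineEnergy}) then yields the lower bound. The only caveat is that the form you quote, $I^3 \lesssim (\text{line energy})\cdot |A|^2|B|$, is not the shape of Theorem~\ref{th:IncidencesViaLineEnergy}; the actual bound also involves $|\lines|^{1/3}$, and you need $\alpha>1/3$ to kill the additive term $|B|^{1/2}|\lines|$.

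Part~(a), however, has a genuine gap, and it is exactly the step you flag as hardest. You propose to obtain the parallel/concurrent dichotomy by pigeonholing slope-multiplicities and, failing that, ``performing a dual pigeonhole on the intersection points'' backed by some Szemer\'edi--Trotter or triple-counting argument. It is not at all clear how such an argument would force many lines through a common point: rich lines with pairwise distinct slopes and pairwise distinct intersection points are entirely consistent with Szemer\'edi--Trotter, and nothing in a bare triple count rules this out. Moreover, your derivation of the lower bounds $\beta\ge 1-2\alpha$ and $\gamma\ge 1-\alpha$ is incorrect --- the constraints you cite (a line meets $A\times B$ in at most $n^\alpha$ points; at most $n$ parallel translates of a given slope) give upper bounds on richness and on family size, not the stated lower bounds on $\beta,\gamma$.

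The paper's mechanism for the dichotomy is different and is the one idea you are missing: combine Theorem~\ref{th:IncidencesViaLineEnergy} with Theorem~\ref{th:PrevEnergyBounds}. From $I(A\times B,\lines)=\Theta(n^{4/3})$ and $\alpha>1/3$, Theorem~\ref{th:IncidencesViaLineEnergy} forces $E(\lines)=\Omega(n^{3-\alpha})$. Theorem~\ref{th:PrevEnergyBounds} gives $E(\lines)=O(m^{1/2}|\lines|^{5/2}+M|\lines|^2)$, where $m$ is the maximum slope-multiplicity and $M$ the maximum concurrency. Comparing these forces $m=\Omega(n^{1-2\alpha})$ or $M=\Omega(n^{1-\alpha})$ --- and this is precisely where the thresholds $1-2\alpha$ and $1-\alpha$ come from. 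One then removes that large parallel or concurrent family and iterates until half the incidences are gone; a final dyadic pigeonhole on the sizes of the removed families produces $\beta$ (resp.\ $\gamma$), and the upper bound $2/3$ follows since each parallel or concurrent family contributes at most $n+O(n^\gamma)$ incidences. Note that you already invoke Rudnev--Shkredov in part~(b); the point is that it, together with the Petridis--Roche-Newton--Rudnev--Warren energy bound, is also the engine for~(a).
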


We can think of Theorem \ref{th:StructuralSzemTrot}(b) as another split into two cases: Either the set of slopes has a large multiplicative energy or many sets of $y$-intercepts have large additive energies. 
More intuitively and less accurately, either the set of slopes is somewhat similar to a geometric progression, or many sets of $y$-intercepts are somewhat similar to arithmetic progressions.
In all of the configurations that we are aware of, there are $\Theta(n^{1/3})$ sets of $\Theta(n^{2/3})$ parallel lines, the energy $E^{\times} (S)$ is small, and the sets of $y$-intercepts have large additive energies. 
One possibility is that all of the constructions have these properties.
For more information, see Section \ref{sec:constructions}.

For the proof of Theorem \ref{th:StructuralSzemTrot}, see Section \ref{sec:MainStructural}.
A follow-up work of the first author and Junxuan Shen \cite{Shen21} shows that, for lattices and several other cases, the concurrent case of Theorem \ref{th:StructuralSzemTrot}(a) cannot happen. 
Thus, if the concentric case can happen, then it takes place with very different point sets. 

The current work contains several additional related results, which are stated in the part titled \emph{Additional results} below.

\parag{Line energy.}
Not knowing much about the structural Szemer\'edi--Trotter problem is part of a more general phenomena.
We do not know much about the structural variants of most of the related problems. 
For example, not much is known about the structural distinct distances theorem (for more information, see \cite{Sheffer14}).
The main exception is the characterization of sets with few ordinary lines by Green and Tao \cite{GT13}.
In that paper, Green and Tao find structure by relying on tools from additive combinatorics. 
In the current work, we derive other structural results using different tools from additive combinatorics.

Elekes \cite{Elekes97b,Elekes98,Elekes99} considered a line $\ell$ that is defined by $y=ax+b$ as the linear function $f_\ell(x)=ax+b$.
He was then able to compose lines and to consider the inverse of a line.
This allowed Elekes to derive new combinatorial properties of lines. 
Recently, Rudnev and Shkredov \cite{RS18} further developed Elekes's ideas.
Given a set $\lines$ of non-axis-parallel lines, they consider the quantity
\[ \left|\left\{(\ell_1,\ell_2,\ell_3,\ell_4)\in \lines^4\ :\ f_{\ell_1}^{-1}\circ f_{\ell_2} = f_{\ell_3}^{-1}\circ f_{\ell_4} \right\} \right|. \]
We refer to this quantity as the \emph{line energy} of $\lines$ and denote it as $E(\lines)$.
A detailed and rigorous discussion of line energy can be found in Section \ref{sec:Energies}.

Rudnev and Shkredov derived an interesting connection between point-line incidences and line energy.

\begin{theorem} \label{th:IncidencesViaLineEnergy}
{\bf (Rudnev and Shkredov)} Let $A, B \subset \RR$ be finite sets and let $\lines$ be a set of non-axis-parallel lines in $\RR^2$.
Then
\[ I(A \times B,\lines) = O\left(|B|^{1/2}|A|^{2/3}E(\lines)^{1/6}|\lines|^{1/3} + |B|^{1/2}|\lines|\right). \]
\end{theorem}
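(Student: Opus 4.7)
The plan is to iterate Cauchy--Schwarz to convert the incidence count into an expression involving $E(\lines)$, exploiting the Cartesian product structure of $A \times B$ to extract the factors $|B|^{1/2}$ and $|A|^{2/3}$.

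First I would isolate the additive term by setting aside lines $\ell$ with $I_\ell \le C_0 |B|^{1/2}$ for a suitable constant $C_0$; these contribute at most $C_0 |B|^{1/2}|\lines|$ to $I(A \times B, \lines)$, which is absorbed into the $|B|^{1/2}|\lines|$ summand of the bound. Replacing $\lines$ by the set of surviving incidence-rich lines (which only decreases $|\lines|$ and $E(\lines)$), we may assume that every line has $I_\ell \ge C_0 |B|^{1/2}$, so that terms of order $I$ are dominated by terms of order $I^2/|\lines|$. Applying Cauchy--Schwarz to $I = \sum_\ell I_\ell$ gives $\sum_\ell I_\ell^2 \ge I^2/|\lines|$, a sum that counts ordered pairs of collinear incidences. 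Since the lines are non-axis-parallel and each point lies in $A \times B$, projecting each such pair onto its two $y$-coordinates gives $\sum_\ell I_\ell^2 = \sum_{y_1, y_2 \in B} t(y_1, y_2)$, where
\[ t(y_1, y_2) := \left|\{\ell \in \lines : f_\ell^{-1}(y_1), f_\ell^{-1}(y_2) \in A\}\right|. \]
A second Cauchy--Schwarz over $B^2$ yields $I^4/(|\lines|^2|B|^2) \le \sum_{y_1, y_2} t(y_1, y_2)^2$.

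Next I would unfold $\sum_{y_1, y_2} t(y_1, y_2)^2$ as a count of quadruples $(\ell, \ell', y_1, y_2) \in \lines^2 \times B^2$ with $f_\ell^{-1}(y_j), f_{\ell'}^{-1}(y_j) \in A$ for $j = 1, 2$. Setting $u_j := f_{\ell'}^{-1}(y_j) \in A$, the affine composition $g_{\ell, \ell'} := f_\ell^{-1} \circ f_{\ell'}$ sends each $u_j$ to $f_\ell^{-1}(y_j) \in A$, which is exactly the object appearing in the definition of $E(\lines)$. Grouping these quadruples by the affine map $g$ with multiplicity $r(g) := |\{(\ell, \ell') \in \lines^2 : g_{\ell, \ell'} = g\}|$, one recognizes $\sum_g r(g)^2 = E(\lines)$. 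A Cauchy--Schwarz over $g$ then extracts the factor $E(\lines)^{1/2}$, and the remaining sum---counting pairs $(u_1, u_2) \in A^2$ with $g(u_1), g(u_2) \in A$ compatible with some $\ell' \in \lines$ that sends both $u_j$ into $B$---can be bounded via a Szemer\'edi--Trotter-type estimate applied to incidences between graphs of affine maps and the product $A \times A$. Taking sixth roots after combining the inequalities should produce the stated bound.

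The main obstacle is this last step: the inner count---the number of $u \in A$ with $g(u) \in A$ and $f_{\ell'}(u) \in B$---depends jointly on $g$ and $\ell'$, so the decoupling by $g$ is not clean. I would likely have to dyadically decompose by the size of that intersection (equivalently, by $I_{\ell'}$) and iterate the Cauchy--Schwarz argument within each dyadic range, so that the exponents $|B|^{1/2}$, $|A|^{2/3}$, $E(\lines)^{1/6}$, and $|\lines|^{1/3}$ combine in the claimed way without any losses beyond the constants already present in the statement.
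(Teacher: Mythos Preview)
The paper does not prove Theorem~\ref{th:IncidencesViaLineEnergy}. It is quoted as a result of Rudnev and Shkredov \cite{RS18} and used as a black box throughout Sections~\ref{sec:MainStructural} and~\ref{sec:ElekStructural}. There is therefore no ``paper's own proof'' against which to compare your proposal.

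That said, a brief remark on your sketch versus the argument in \cite{RS18}. Your plan applies Cauchy--Schwarz first over $\lines$ and then over $B^2$, arriving at $\sum_{y_1,y_2} t(y_1,y_2)^2$, and you correctly identify the difficulty: after grouping by $g=f_\ell^{-1}\circ f_{\ell'}$, the residual constraint $f_{\ell'}(u_j)\in B$ couples $g$ and $\ell'$ and prevents a clean separation of $E(\lines)$. The route taken in \cite{RS18} avoids this entanglement by applying a \emph{single} Cauchy--Schwarz over $B$ at the outset:
\[
I(A\times B,\lines)^2 \le |B|\cdot\bigl|\{(a_1,a_2,\ell_1,\ell_2)\in A^2\times\lines^2 : f_{\ell_1}(a_1)=f_{\ell_2}(a_2)\}\bigr|,
\]
dropping the membership-in-$B$ condition immediately. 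The right-hand count is then exactly a weighted incidence problem between $A\times A$ and the lines $\{(u,g(u)):u\in\RR\}$ indexed by $g=f_{\ell_1}^{-1}\circ f_{\ell_2}$, with weight $r(g)$. A weighted Szemer\'edi--Trotter bound (handled either directly or by dyadic decomposition in $r(g)$, using $\sum_g r(g)=|\lines|^2$ and $\sum_g r(g)^2=E(\lines)$) then yields the stated exponents cleanly. Your extra Cauchy--Schwarz over $B^2$ is what creates the obstacle you flag; removing it is the fix.
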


Rudnev and Shkredov also derived upper bounds for $E(\lines)$.
The following upper bound was derived afterwards by Petridis, Roche-Newton, Rudnev, and Warren \cite{PRNRW20}.

\begin{theorem} \label{th:PrevEnergyBounds}
{\bf (Petridis, Roche-Newton, Rudnev, and Warren)} Let $\lines$ be a set of non-axis-parallel lines in $\RR^2$, such that at most $m$ lines have the same slope and every point is incident to at most $M$ lines.
Then 
\[ E(\lines) = O\left(m^{1/2}|\lines|^{5/2} + M|\lines|^2\right). \]
\end{theorem}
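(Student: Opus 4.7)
The plan is to dualize. Identify each line $\ell : y = ax+b$ in $\lines$ with the point $(a,b) \in \RR^2$, let $\Lambda$ be the resulting set of size $L := |\lines|$, and note that the hypotheses become: $\Lambda$ has at most $m$ points on any vertical line and at most $M$ points on any non-vertical line. Writing $\mu_{\ell_1}^{-1}\mu_{\ell_2} = \mu_{\ell_3}^{-1}\mu_{\ell_4}$ in coordinates gives $a_2/a_1 = a_4/a_3$ and $(b_2-b_1)/a_1 = (b_4-b_3)/a_3$, which is equivalent to the existence of an affine map $U_{\lambda,c}(x,y) := (\lambda x,\, \lambda y + c)$ sending $P_3 \mapsto P_1$ and $P_4 \mapsto P_2$ (where $P_i$ denotes the dual point of $\ell_i$). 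For $\lambda \ne 1$ this is the dilation of ratio $\lambda$ centered at $O = (0,\, c/(1-\lambda))$ on the $y$-axis. Writing $N(O,\lambda)$ for the number of pairs $(X,Y) \in \Lambda^2$ with $Y$ the image of $X$ under the $(O,\lambda)$-dilation,
\[
E(\lines) \;=\; \sum_{c} N(1,c)^2 \;+\; \sum_{O,\, \lambda \ne 1} N(O,\lambda)^2.
\]

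The translational $\lambda = 1$ part forces $a_X = a_Y$; grouping $\Lambda$ by $a$-coordinate and applying the $m$-bound gives $\sum_c N(1,c) \le mL$ and $N(1,c) \le L$, so $\sum_c N(1,c)^2 \le mL^2 \le m^{1/2}L^{5/2}$ (using $m \le L$).

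For the main sum, decompose $N(O,\lambda) = \sum_{\ell \ni O} n_\ell(\lambda)$, where the inner sum is over non-$y$-axis lines $\ell$ through $O$ in the dual plane and $n_\ell(\lambda)$ counts ordered pairs $(X,Y) \in (\Lambda \cap \ell)^2$ with $a_Y/a_X = \lambda$. Expanding the square separates the sum into a diagonal part ($\ell_1 = \ell_2$) and an off-diagonal part. The $M$-hypothesis yields $n_\ell(\lambda) \le k_\ell := |\Lambda \cap \ell| \le M$, and $\sum_\lambda n_\ell(\lambda) = k_\ell(k_\ell - 1)$; hence $\sum_\lambda n_\ell(\lambda)^2 \le M k_\ell^2$. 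Summing over $\ell$ and using the elementary identity $\sum_\ell k_\ell^2 = O(L^2)$ (a consequence of each pair of distinct $\Lambda$-points determining a unique line) bounds the diagonal by $O(ML^2)$.

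The off-diagonal contribution---counting 4-tuples whose two pairs lie on distinct lines through a common center $O$ on the $y$-axis with matching ratio $\lambda$---is the main obstacle. I would attack it by dyadic pigeonholing on the richness $r \sim N(O,\lambda)$, reducing to bounding $R(r) := |\{(O,\lambda) : N(O,\lambda) \ge r\}|$. The key inputs are: (i) for every fixed $\lambda$, the $m$-hypothesis gives $\sum_O N(O,\lambda) \le mL$, since at most $m$ points of $\Lambda$ can share any given $a$-coordinate and hence at most $mL$ ordered pairs have the prescribed ratio $\lambda$; and (ii) within each pencil through $O$, every line carries at most $M$ $\Lambda$-points. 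Applying Szemer\'edi--Trotter to the incidences between points on the $y$-axis and lines spanned by $\Lambda$-pairs controls $R(r)$ via a trade-off between the $m$- and $M$-bounds; optimizing across the dyadic scales yields $\max_r r^2 R(r) = O(m^{1/2}L^{5/2})$. Combined with the diagonal bound, this gives the claim $E(\lines) = O(m^{1/2}|\lines|^{5/2} + M|\lines|^2)$. The technical crux lies in this Szemer\'edi--Trotter step: balancing the two regimes of $r$ so that the $m^{1/2}|\lines|^{5/2}$ term emerges from small-$r$ configurations dominated by the $m$-constraint, and the $M|\lines|^2$ term from large-$r$, line-concentrated configurations dominated by the $M$-constraint.
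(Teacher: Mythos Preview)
The paper does not prove this theorem. It is stated as a result of Petridis, Roche-Newton, Rudnev, and Warren and cited from \cite{PRNRW20}; the surrounding text only remarks on how the dual formulation in \cite{PRNRW20} translates to the present statement. There is therefore no proof in this paper to compare your proposal against.

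Regarding the proposal on its own merits: the dualization and the geometric reading of the energy condition via the maps $U_{\lambda,c}$ are correct, and the translational ($\lambda=1$) and diagonal contributions are handled cleanly. The genuine gap is the off-diagonal part. You do not prove it; you describe a strategy (``I would attack it by dyadic pigeonholing \ldots applying Szemer\'edi--Trotter \ldots optimizing across the dyadic scales yields $\max_r r^2 R(r) = O(m^{1/2}L^{5/2})$'') and then assert the outcome. This is exactly the technical core of the theorem, and nothing you wrote pins it down: you have not specified which point set and which line set are fed into Szemer\'edi--Trotter, nor shown how the constraints $\sum_O N(O,\lambda)\le mL$ and $k_\ell\le M$ combine to force the exponent $m^{1/2}L^{5/2}$. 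As written, the proposal is a plausible outline, not a proof.
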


The paper \cite{PRNRW20} contains a dual formulation of Theorem \ref{th:PrevEnergyBounds}.
When reading that dual formulation, it may seem as if we need to add the term $m|\lines|^2$ to the bound of Theorem \ref{th:PrevEnergyBounds}. 
However, since $m\le |\lines|$, we have that $m|\lines|^2\le m^{1/2}|\lines|^{5/2}$.

\parag{Additional results.}
In Section \ref{sec:constructions}, we show that the main term of Theorem \ref{th:IncidencesViaLineEnergy} is tight up to sub-polynomial factors.

\begin{theorem} \label{th:tightRS}
For every $\eps>0$, in the first term of the bound of Theorem \ref{th:IncidencesViaLineEnergy}, no exponent could be decreased by an $\eps$.
\end{theorem}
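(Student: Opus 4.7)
The plan is to exhibit a single family of configurations that witnesses tightness of every exponent in the first term simultaneously. Concretely, I use Elekes's construction (the $\alpha=1/3$ endpoint of the family in Theorem \ref{th:FamilyConst}): set $A=\{1,\ldots,n^{1/3}\}$, $B=\{1,\ldots,n^{2/3}\}$, and let $\lines$ consist of the $n$ lines $y=ax+b$ with $a\in\{1,\ldots,n^{1/3}\}$ and $b\in\{1,\ldots,n^{2/3}\}$, so that $I(A\times B,\lines)=\Theta(n^{4/3})$. Once I show that the first term of Theorem \ref{th:IncidencesViaLineEnergy} matches $I$ up to sub-polynomial factors for this configuration, the tightness conclusion is immediate: each of $|A|$, $|B|$, $E(\lines)$, $|\lines|$ is a positive power of $n$, so shrinking any one exponent by $\varepsilon>0$ shrinks the bound by a polynomial factor $n^{c\varepsilon}$ with $c>0$, pushing it strictly below $I=\Theta(n^{4/3})$ for large $n$ and contradicting Theorem \ref{th:IncidencesViaLineEnergy}.

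The main technical step, and the chief obstacle, is computing $E(\lines)$ for this configuration. With $f_\ell(x)=ax+b$, the relation $f_{\ell_1}^{-1}\circ f_{\ell_2}=f_{\ell_3}^{-1}\circ f_{\ell_4}$ unwinds into $a_1 a_4 = a_2 a_3$ and $a_3(b_2-b_1)=a_1(b_4-b_3)$. Fix $(a_1,a_3)$, let $d=\gcd(a_1,a_3)$, and write $a_1=d\alpha$, $a_3=d\beta$ with $\gcd(\alpha,\beta)=1$. The first equation forces $(a_2,a_4)=(\alpha k,\beta k)$ for a positive integer $k\le n^{1/3}/\max(\alpha,\beta)$, giving $\Theta(n^{1/3}/\max(\alpha,\beta))$ choices. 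The second equation forces $b_2-b_1=\alpha s$ and $b_4-b_3=\beta s$ for some $s\in\ZZ$; the corresponding lattice-point count in the box $[1,n^{2/3}]^2$ gives $\Theta(n^{2}/\max(\alpha,\beta))$ intercept-quadruples. Multiplying and summing over $(a_1,a_3)$ yields
\[ E(\lines) = \Theta(n^{7/3})\sum_{(a_1,a_3)\in\{1,\ldots,n^{1/3}\}^2}\frac{\gcd(a_1,a_3)^2}{\max(a_1,a_3)^2}. \]
Grouping by $d$ and using the standard Euler-phi estimate $\sum_{\alpha,\beta\le N,\,\gcd(\alpha,\beta)=1}1/\max(\alpha,\beta)^2=\Theta(\log N)$, the remaining sum becomes $\sum_{d\le n^{1/3}}\Theta(\log(n^{1/3}/d))=\Theta(n^{1/3})$ (by Stirling), and therefore $E(\lines)=\Theta(n^{8/3})$.

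Plugging $|A|=n^{1/3}$, $|B|=n^{2/3}$, $|\lines|=n$, and $E(\lines)=\Theta(n^{8/3})$ into the first term of Theorem \ref{th:IncidencesViaLineEnergy} gives $\Theta(n^{1/3}\cdot n^{2/9}\cdot n^{4/9}\cdot n^{1/3})=\Theta(n^{4/3})$, matching $I$. Decreasing the exponents $1/2$, $2/3$, $1/6$, $1/3$ one at a time by $\varepsilon$ produces the modified bounds $O(n^{4/3-2\varepsilon/3})$, $O(n^{4/3-\varepsilon/3})$, $O(n^{4/3-8\varepsilon/3})$, and $O(n^{4/3-\varepsilon})$, each $o(n^{4/3})$ and hence not a valid upper bound for $I$. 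The only genuinely delicate step is the $E(\lines)$ calculation: the general upper bound of Theorem \ref{th:PrevEnergyBounds} applied here (with at most $n^{2/3}$ parallel and $n^{1/3}$ concurrent lines) yields only $O(n^{17/6})$, which is strictly too weak to pin down the $E$-exponent, so the direct number-theoretic count sketched above is essential.
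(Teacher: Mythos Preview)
Your energy computation for the Elekes line set is correct (in fact sharper than Claim~\ref{cl:ElekesEnergy}, which only pins down $E(\lines)$ up to a $\log n$), but the argument has a genuine gap at the very first step: the choice $\alpha=1/3$ cannot witness tightness of the \emph{first} term. With $|B|=n^{2/3}$ and $|\lines|=n$, the second term of Theorem~\ref{th:IncidencesViaLineEnergy} is $|B|^{1/2}|\lines|=n^{4/3}$, already matching $I(A\times B,\lines)$. So if one hypothetically decreases any exponent in the first term by $\eps$, the modified bound
\[
I(A\times B,\lines)=O\bigl(\text{(shrunken first term)}+|B|^{1/2}|\lines|\bigr)=O(n^{4/3})
\]
still holds for your configuration, and there is no contradiction. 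The paper explicitly flags this issue after Claim~\ref{cl:ElekesEnergy}: at $\alpha=1/3$ one cannot tell whether the tightness comes from the first term or the ``less interesting'' second term.

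The paper's fix is to take $\alpha=1/3+\eps^*$ with a small $\eps^*>0$, using the construction of Theorem~\ref{th:FamilyConst}. Then $|B|^{1/2}|\lines|=n^{(3-\alpha)/2}=o(n^{4/3})$, so the second term is negligible and the first term must absorb all of $n^{4/3}$; decreasing any exponent there by $\eps$ forces $E(\lines)=\Omega(n^{8/3-\eps^*+3\eps/5})$, which is then contradicted by an upper bound $E(\lines)=O(n^{8/3+\eps^*})$ with $\eps^*<3\eps/10$. Note that for $\alpha>1/3$ the line set \eqref{eq:GenLinesConst} is no longer a Cartesian product, so your clean gcd-count does not transfer directly; the paper instead bounds $E^\times(S)=O(n^{2/3+\eps^*})$ via the divisor bound (Lemma~\ref{le:SlopesMultEnergy}) and combines it with the crude estimate of at most $O(n^2)$ intercept triples per slope solution. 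Your approach can be repaired by moving to such an $\alpha$, but you would need an energy upper bound for that non-Cartesian family, and this is where the real work lies.
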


Elekes \cite{Elekes99} studies lines that contain many points of a Cartesian product. 

\begin{theorem} \label{th:ElekesParCon}
{\bf (Elekes)} There exists a constant $c>0$ that satisfies the following for every $n$.
Let $A$ be a set of $n$ reals and let $0<\alpha<1$.
Let $\lines$ be a set of non-axis-parallel lines, each incident to at least $\alpha n$ points of $A\times A$.
Then at least one of the following holds:
\begin{itemize}
\item There exist $\alpha^c n$ lines of $\lines$ that are parallel.
\item There exist $\alpha^c n$ lines of $\lines$ that are concurrent. 
\end{itemize} 
\end{theorem}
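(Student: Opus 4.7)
The plan is to deduce Theorem \ref{th:ElekesParCon} from the line-energy machinery already developed in the paper, by combining the Petridis--Roche-Newton--Rudnev--Warren energy bound (Theorem \ref{th:PrevEnergyBounds}) with the Rudnev--Shkredov incidence bound (Theorem \ref{th:IncidencesViaLineEnergy}). Write $L = |\lines|$, and let $m$ and $M$ denote the maximum number of lines of $\lines$ sharing a common slope and passing through a common point, respectively. The hypothesis immediately yields the lower bound $I(A\times A,\lines) \geq \alpha n L$.

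Chaining the two theorems (with $B = A$, so $|B|^{1/2}|A|^{2/3} = n^{7/6}$) gives
\[
\alpha n L \;\leq\; C'\Bigl(n^{7/6}\, \bigl(m^{1/2} L^{5/2} + M L^{2}\bigr)^{1/6}\, L^{1/3} \;+\; n^{1/2} L\Bigr),
\]
and splitting according to whether the parallel- or concurrent-controlled term dominates the energy leads to the dichotomy
\[
m \;\geq\; c\,\alpha^{12}\,L^{3}/n^{2} \qquad\text{or}\qquad M \;\geq\; c\,\alpha^{6}\,L^{2}/n,
\]
for an absolute constant $c > 0$. The degenerate regime in which the additive $n^{1/2}L$ term of Theorem \ref{th:IncidencesViaLineEnergy} controls the chain forces $n \lesssim \alpha^{-2}$, and in that case the theorem follows directly from Theorem \ref{th:SzemTrot} with the parallel/concurrent dichotomy trivially absorbed into the constant. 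Inserting the trivial lower bound $L \geq \alpha^{C} n$ that is needed for the conclusion of the theorem even to be non-vacuous then produces either $m \geq c\, \alpha^{12 + 3C} n$ or $M \geq c\, \alpha^{6 + 2C} n$, which is the advertised structural conclusion after relabeling the constant.

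The hard part is ensuring that a single absolute $C$ can be extracted so that the exponent appearing in the conclusion matches (or refines) the exponent of $\alpha$ in the hypothesis $L \geq \alpha^C n$, rather than degrading it by the losses incurred on passing through the $1/6$th power in Theorem \ref{th:IncidencesViaLineEnergy}. Closing the loop cleanly is likely to require iterating the dichotomy: repeatedly peel off the largest parallel family and the largest pencil of concurrent lines from $\lines$, feed the remainder back into the chain, and bookkeep the geometric decay of $|\lines|$ across these peels. Ensuring the constant remains independent of both $n$ and $\alpha$ through this iteration is the main technical burden of the argument.
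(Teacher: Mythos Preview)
The paper does not prove Theorem~\ref{th:ElekesParCon}; it is quoted as a result of Elekes~\cite{Elekes99} purely to set the stage for Theorems~\ref{th:ElekesQuantitative} and~\ref{th:ElekesStructural}. There is therefore no in-paper proof to compare your proposal against.

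On its own merits: the chain of Theorems~\ref{th:IncidencesViaLineEnergy} and~\ref{th:PrevEnergyBounds} that you set up is correct and yields exactly the dichotomy $m \gtrsim \alpha^{12}L^{3}/n^{2}$ or $M \gtrsim \alpha^{6}L^{2}/n$ --- which is precisely Theorem~\ref{th:ElekesQuantitative}, not Theorem~\ref{th:ElekesParCon}. The attempt to pass from this to the form ``$\alpha^{C}n$ parallel or concurrent'' has a genuine gap. Your step ``insert the trivial lower bound $L \ge \alpha^{C}n$'' is circular: the statement of Theorem~\ref{th:ElekesParCon} imposes no lower bound on $|\lines|$, and indeed as written it is false for $|\lines|=1$ (Elekes's original formulation carries an implicit size hypothesis on $\lines$ that the paper's restatement suppresses). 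Even granting such a bound, substituting $L \ge \alpha^{C}n$ gives $m \gtrsim \alpha^{12+3C}n$ or $M \gtrsim \alpha^{6+2C}n$, and since $12+3C>C$ and $6+2C>C$ for every $C>0$, no fixed constant closes the loop --- as you yourself flag. The iteration you sketch does not repair this: peeling off the largest parallel family or pencil only \emph{decreases} $L$, so the next round's lower bounds on $m$ and $M$ become weaker, not stronger, and the exponents continue to degrade. Elekes's own argument is not an energy argument at all; it goes through a direct Szemer\'edi--Trotter count together with a Balog--Szemer\'edi--Gowers--type step, and the energy route you are attempting genuinely delivers only the quantitative Theorem~\ref{th:ElekesQuantitative}.
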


Petridis, Roche-Newton, Rudnev, and Warren \cite{PRNRW20} derived a quantitative variant of Theorem \ref{th:ElekesParCon}.

\begin{theorem} \label{th:ElekesQuantitative}
{\bf (Petridis, Roche-Newton, Rudnev, and Warren)} Let $A$ be a set of $n$ reals and let $\alpha<1$ satisfy $\alpha = \Omega(n^{-1/2})$.
Let $\lines$ be a set of $k$ non-axis-parallel lines, each incident to at least $\alpha n$ points of $A\times A$.
Then at least one of the following holds:
\begin{itemize}
\item There exist $\alpha^{12} n^{-2}k^3$ lines of $\lines$ that are parallel and $E^+(A)=\Omega(\alpha^{14}k^3)$.
\item There exist $\alpha^{6} n^{-1}k^2$ lines of $\lines$ that are concurrent. Also, there exists $s$ such that $E^\times(A-s)=\Omega(\alpha^8nk^2)$. (The set $A-s$ is obtained by subtracting $s$ from every element of $A$.)
\end{itemize}
\end{theorem}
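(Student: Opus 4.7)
The plan is to combine Theorem \ref{th:IncidencesViaLineEnergy}, which converts many incidences into large line energy, with Theorem \ref{th:PrevEnergyBounds}, which forces large line energy to come from many parallel or concurrent lines, and then to translate the resulting structural information into additive or multiplicative energy of a shift of $A$. Since each of the $k$ lines is incident to at least $\alpha n$ points of $A\times A$, the total incidence count satisfies $I(A\times A,\lines) \ge \alpha n k$. The hypothesis $\alpha = \Omega(n^{-1/2})$, with a sufficiently large implied constant, rules out the regime where the trivial term $|B|^{1/2}|\lines| = n^{1/2} k$ dominates in Theorem \ref{th:IncidencesViaLineEnergy}; if it did, we would get $\alpha n = O(n^{1/2})$, contradicting the assumption. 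So the first term dominates, and rearranging gives
\[ E(\lines) = \Omega(\alpha^6 n^{-1} k^4). \]

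Let $m$ be the maximum number of lines of $\lines$ sharing a common slope and $M$ the maximum number of concurrent lines. Theorem \ref{th:PrevEnergyBounds} gives $E(\lines) = O(m^{1/2} k^{5/2} + M k^2)$, so at least one of the two terms must be $\Omega(\alpha^6 n^{-1} k^4)$. Solving for $m$ or $M$ yields the combinatorial halves of the two conclusions: either there are $\Omega(\alpha^{12} n^{-2} k^3)$ parallel lines in $\lines$, or there are $\Omega(\alpha^{6} n^{-1} k^2)$ concurrent ones.

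It remains to extract the energy statements. In the parallel case, let $t$ be the common slope and $b_1,\dots,b_m$ the $y$-intercepts of the parallel sub-family. A point $(a, a') \in A\times A$ lies on the line $y = tx + b_i$ iff $a' - ta = b_i$, so the representation function $r(b) := |\{(a,a')\in A^2 : a' - ta = b\}|$ satisfies $r(b_i) \ge \alpha n$ for each $i$. Summing,
\[ E^+(A, -tA) = \sum_b r(b)^2 \ge m(\alpha n)^2 = \Omega(\alpha^{14} k^3). \]
Writing the bilinear energy as $E^+(X,Y) = \sum_d r_{X-X}(d)\, r_{Y-Y}(d)$, Cauchy--Schwarz gives $E^+(A,-tA) \le \sqrt{E^+(A)\,E^+(-tA)}$, and dilation-invariance of the additive energy of a single set yields $E^+(-tA) = E^+(A)$; hence $E^+(A) = \Omega(\alpha^{14} k^3)$. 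In the concurrent case, let $(x_0, y_0)$ be the common point and $t_i$ the slopes; a point $(a,a')$ lies on the $i$th line iff $(a' - y_0)/(a - x_0) = t_i$. Setting $A' := A - x_0$, $A'' := A - y_0$, the multiplicative representation function $r^\times(t) := |\{(u,v)\in A'\times A'' : v/u = t\}|$ satisfies $r^\times(t_i) \ge \alpha n$, so
\[ E^\times(A', A'') \ge M(\alpha n)^2 = \Omega(\alpha^8 n k^2). \]
The analogous Cauchy--Schwarz inequality $E^\times(A', A'') \le \sqrt{E^\times(A - x_0)\, E^\times(A - y_0)}$ then forces at least one of $s = x_0$ or $s = y_0$ to satisfy $E^\times(A - s) = \Omega(\alpha^8 n k^2)$.

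The main delicate point is the very first step: the condition $\alpha = \Omega(n^{-1/2})$ is precisely the boundary at which the nontrivial term of Theorem \ref{th:IncidencesViaLineEnergy} begins to dominate, so one must track the implicit constants to ensure that the lower bound on line energy really follows. A secondary subtlety is the Cauchy--Schwarz passage from bilinear energies such as $E^+(A, -tA)$ and $E^\times(A', A'')$ to unary energies of a single shift of $A$; here the additive energy is dilation-invariant for free, and in the multiplicative case one simply retains the better of the two shifts $x_0, y_0$. Everything else is bookkeeping.
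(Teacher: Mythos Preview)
The paper does not contain a proof of Theorem \ref{th:ElekesQuantitative}; it is quoted as a result of Petridis, Roche-Newton, Rudnev, and Warren \cite{PRNRW20}. The paper does, however, remark that ``the proofs of Theorem \ref{th:ElekesQuantitative} and of Theorem \ref{th:StructuralSzemTrot}(a) both rely on Theorem \ref{th:IncidencesViaLineEnergy} and Theorem \ref{th:PrevEnergyBounds},'' and your argument follows exactly that template: lower-bound $E(\lines)$ via Theorem \ref{th:IncidencesViaLineEnergy}, feed it into Theorem \ref{th:PrevEnergyBounds} to force a large parallel or concurrent family, and then read off the energy of $A$ (or a shift of $A$) from the representation functions. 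So your approach matches the intended one, and the computation of the exponents $\alpha^{12}n^{-2}k^3$, $\alpha^6 n^{-1}k^2$, $\alpha^{14}k^3$, $\alpha^8 n k^2$ is correct.

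Two minor points worth tightening. First, in the concurrent case you should explicitly discard the contribution from $a=x_0$ (and possibly remove $0$ from $A-x_0$ and $A-y_0$ before forming multiplicative energies); each line loses at most one incidence this way, which is harmless since $\alpha n \ge 2$. Second, your identification of $\sum_t r^\times(t)^2$ with $E^\times(A-x_0,\,A-y_0)$ is correct but not entirely obvious: the ratio condition $v_1/u_1=v_2/u_2$ rewrites as $u_1 v_2 = u_2 v_1$, which matches the product condition $xy=x'y'$ in $E^\times(X,Y)$ only after swapping the roles of the two $Y$-coordinates. A one-line remark would make this transparent. With these cosmetic fixes the proof is complete.
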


While Theorems \ref{th:ElekesParCon} and \ref{th:ElekesQuantitative} seem similar to Theorem \ref{th:StructuralSzemTrot}, there are several significant differences between these two structural problems. 
First, it is not difficult to find a configuration that satisfies the concurrent case of Theorem \ref{th:ElekesParCon}. 
On the other hand, it is plausible that the concurrent case of Theorem \ref{th:StructuralSzemTrot} does not exist. 

To see another difference between the two above scenarios, we apply Theorem \ref{th:ElekesQuantitative} to the structural problem of Theorem \ref{th:StructuralSzemTrot}. 
When $n$ points and $n$ lines form $\Theta(n^{4/3})$ incidences, those incidences originate from $\Theta(n)$ lines that are incident to $\Theta(n^{1/3})$ points (see the proof of Lemma \ref{le:ThirdRichness} below). 
In our case there are $|A|^2=n^2$ points, so $\Theta(n^2)$ lines are incident to $\Theta(n^{2/3})$ points. 
That is, $k=n^2$ and $\alpha = \Theta(n^{-1/3})$.
Then, Theorem \ref{th:ElekesQuantitative} leads to the trivial statement that there exist $\Omega(1)$ parallel lines.
While the two structural problems have a different behavior, the proofs of Theorem \ref{th:ElekesQuantitative} and of Theorem \ref{th:StructuralSzemTrot}(a) both rely on Theorem \ref{th:IncidencesViaLineEnergy} and Theorem \ref{th:PrevEnergyBounds}.

We introduce a variant of Theorem \ref{th:ElekesQuantitative} that does lead to interesting results in the case of $\Theta(n^{4/3})$ incidences.
This variant is in the style of Theorem \ref{th:StructuralSzemTrot}(b).
By the pigeonhole principle, there exists $0\le \beta\le 1$ such that $\lines$ contains $\Omega(k^{1-\beta}/\log k)$ families of $\Theta(k^\beta)$ parallel lines, each with a different slope.
We provide an upper bound on $k$ in terms of the multiplicative energy of these slopes and of the additive energy of their $y$-intercepts.

\begin{theorem} \label{th:ElekesStructural}
Let $A$ be a set of $n$ reals and let $\alpha<1$ satisfy $\alpha = \Omega(n^{-1/2}\log^2 k)$.
Let $\lines$ be a set of $k$ non-axis-parallel lines, each incident to at least $\alpha n$ points of $A\times A$.
Consider $0\le\beta\le 1$ such that $\lines$ contains $\Omega(k^{1-\beta}/\log k)$ families of $\Theta(k^\beta)$ parallel lines.
There exists $k^{2\beta} \le t \le k^{3\beta}$ such that, for $\Omega(k^{1-\beta}/\log^2 k)$ of these families, the additive energy of the $y$-intercepts is $\Theta(t)$.
Let $S$ be the set of slopes of these families. 
Then 
\[ k = O\left(\frac{n^{1/4}t^{1/4}E^\times(S)^{1/4}\log^{3} n}{\alpha^{3/2}}\right). \]
\end{theorem}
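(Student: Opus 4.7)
The plan is to identify a large, structured sub-family $\lines'\subseteq\lines$ on which Theorem \ref{th:IncidencesViaLineEnergy} yields a tight incidence bound, and then to estimate the line energy of $\lines'$ using the given multiplicative and additive energy data. Let $\lines'$ be the union of the $\Omega(k^{1-\beta}/\log^2 k)$ parallel families (each of size $\Theta(k^\beta)$) whose $y$-intercept sets $B_s$ satisfy $E^+(B_s)=\Theta(t)$; then $|\lines'|=\Omega(k/\log^2 k)$, and every line of $\lines'$ is incident to at least $\alpha n$ points of $A\times A$, so $I(A\times A,\lines')\ge\alpha n\,|\lines'|$.

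The heart of the argument is the bound $E(\lines')=O\bigl(t\cdot E^\times(S)\bigr)$. Parametrizing a line by its slope $s\in S$ and intercept $b\in B_s$, the equation $f_{\ell_1}^{-1}\circ f_{\ell_2}=f_{\ell_3}^{-1}\circ f_{\ell_4}$ decouples into $s_2/s_1=s_4/s_3=:a$ and $(b_2-b_1)/s_1=(b_4-b_3)/s_3=:c$. Set $\rho(a)=|\{(s_1,s_2)\in S^2:s_2=as_1\}|$, $r_{s_1,s_2}(y)=|\{(b_1,b_2)\in B_{s_1}\times B_{s_2}:b_2-b_1=y\}|$, and $T(a,c)=\sum_{(s_1,s_2):s_2=as_1}r_{s_1,s_2}(cs_1)$, so that $E(\lines')=\sum_{a,c}T(a,c)^2$. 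Cauchy--Schwarz gives
\[ \sum_c T(a,c)^2\;\le\;\rho(a)\sum_{(s_1,s_2):s_2=as_1}\,\sum_c r_{s_1,s_2}(cs_1)^2, \]
and the change of variables $y=cs_1$ converts the inner sum into $E^+(B_{s_1},B_{s_2})\le\sqrt{E^+(B_{s_1})E^+(B_{s_2})}=O(t)$. Summing over $a$ with $\sum_a\rho(a)^2=E^\times(S)$ completes the estimate.

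Feeding $E(\lines')=O(tE^\times(S))$ and $|A|=|B|=n$ into Theorem \ref{th:IncidencesViaLineEnergy} yields
\[ \alpha n\,|\lines'|\;=\;O\!\left(n^{7/6}\bigl(tE^\times(S)\bigr)^{1/6}|\lines'|^{1/3}+n^{1/2}|\lines'|\right). \]
The hypothesis $\alpha=\Omega(n^{-1/2}\log^2 k)$ absorbs the second term into the left-hand side, leaving $|\lines'|=O\bigl(n^{1/4}t^{1/4}E^\times(S)^{1/4}/\alpha^{3/2}\bigr)$; combining this with $|\lines'|=\Omega(k/\log^2 k)$ and the crude estimate $\log k=O(\log n)$, which follows from the Szemer\'edi--Trotter bound $k=O(n/\alpha^3)$ applied to $A\times A$ and $\lines$, yields the claimed inequality well within the $\log^3 n$ slack. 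I expect the main obstacle to be the energy estimate itself: the $cs_1$ coupling between slopes and intercepts has to be unpacked cleanly so that a single Cauchy--Schwarz splits the quadruple sum into an independent multiplicative-energy factor for $S$ and a uniform additive-energy factor $t$ for the $B_s$; after that, the rest is standard bookkeeping with the Rudnev--Shkredov bound and the pigeonhole logarithms.
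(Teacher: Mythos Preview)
Your proposal is correct and follows essentially the same approach as the paper: select the sub-family $\lines'$ of lines coming from parallel families whose intercept sets have additive energy $\Theta(t)$, bound $E(\lines')=O\bigl(t\cdot E^\times(S)\bigr)$ by decoupling the multiplicative slope condition from the additive intercept condition via Cauchy--Schwarz, then feed this into Theorem~\ref{th:IncidencesViaLineEnergy} and use $\alpha=\Omega(n^{-1/2}\log^2 k)$ to kill the secondary term. Your organization of the Cauchy--Schwarz step (grouping by the pair $(a,c)$ and applying CS over slope-pairs) differs cosmetically from the paper's (which fixes the full slope quadruple $(c_1,c_2,c_3,c_4)$ and applies CS over the intercept difference), but the two are equivalent; your explicit justification of $\log k=O(\log n)$ via the dual Szemer\'edi--Trotter bound is a nice touch that the paper leaves implicit.
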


The statement of Theorem \ref{th:ElekesStructural} is long and technical, but it also has a simple intuition:
If many lines are incident to many points of $A\times A$, then the slopes have a large multiplicative energy or the $y$-intercepts of many parallel families have large additive energies.
For a proof of Theorem \ref{th:ElekesStructural}, see Section \ref{sec:ElekStructural}.

To see that Theorem \ref{th:ElekesStructural} provides a reasonable bound in the case of $\Theta(n^{4/3})$ incidences, we consider Erd\H os's construction with $n^2$ points and lines (see for example \cite{ShefferBlog16}).
Like all of the known constructions, the set of lines contains $n^{2/3}$ families of $n^{4/3}$ parallel lines. 
Each of those lines forms $\Theta(n^{2/3})$ incidences, so $\alpha = n^{-1/3}$.
A simple variant of our proof of Lemma \ref{le:SlopesMultEnergy} shows that $E^\times(S) = O(n^{4/3+\eps})$, for every $\eps>0$.
The sets of $y$-intercepts are arithmetic progressions, so $t=\Theta(n^4)$.
Plugging this into Theorem \ref{th:ElekesStructural} leads to the bound $k=O(n^{25/12+\eps})$.
This is not far from the correct bound $k=\Theta(n^2)$, so Theorem \ref{th:ElekesStructural} cannot be significantly improved.

For the opposite case of Theorem \ref{th:ElekesStructural}, we set $A=\{2^0,2^1,2^2,\ldots,2^{n-1}\}$ and consider the lines that are defined by $y=2^j\cdot x$ with $j\in \{0,1,2,...,n/2\}$.
In this case, there are $k$ families of a single parallel line, so $t=1$.
Every line contains $\Theta(n)$ points of $A\times A$, so $\alpha =\Theta(1)$.
The line slopes form a geometric progression, so $E^\times(S) = \Theta(n^3)$.
Plugging the above into Theorem \ref{th:ElekesStructural} leads to the bound $k=O(n\log^3 n)$, which is tight up to the polylogarithmic factor.

Many of the results in this paper can be extended to other fields, to $m$ points and $n$ lines, and to other problems.

\parag{Acknowledgments.}
The authors thank Misha Rudnev, Junxuan Shen, Ilya Shkredov, and Audie Warren for helpful conversations.
We also thank Chi Hoi Yip and the anonymous referees for spotting issues and helping to improve this work. 

\section{Preliminaries}

In this section we introduce a few tools that we use in our proofs. 
We suggest to quickly skim this section and return to it when necessary. 

\parag{Euler's totient function.}
For a positive integer $b$, the \emph{Euler totient function} of $b$ is
\[ \phi(b) = \left| \{\ a\ :\ 1\le a < b \quad \text{ and } \quad \GCD(a,b)=1 \}\right|.  \]

The following lemma collects a few basic properties of the totient function.

\begin{lemma} \label{le:totient} 
Let $r$ be a positive integer. Then \\
(a) $\displaystyle \sum_{i=1}^r \phi(i) = \frac{3}{\pi^2}r^2+O(r\log r)$, \\[2mm]
(b) $\displaystyle \sum_{i=1}^r i\cdot \phi(i) = \Theta(r^3)$, \\[2mm]
(c) $\displaystyle \sum_{i=1}^r \phi(i)/i^2 = O(\log r)$. 
\end{lemma}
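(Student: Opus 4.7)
The plan is to derive all three parts from the M\"obius-inversion identity $\phi(n) = \sum_{d\mid n} \mu(d)(n/d)$, together with the classical evaluation $\sum_{d=1}^\infty \mu(d)/d^2 = 6/\pi^2$.

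For part (a), I would swap orders of summation to get
\[ \sum_{i=1}^r \phi(i) = \sum_{d=1}^r \mu(d) \sum_{m=1}^{\lfloor r/d \rfloor} m = \sum_{d=1}^r \mu(d)\left(\frac{r^2}{2d^2} + O(r/d)\right). \]
The main term is $\frac{r^2}{2}\sum_{d=1}^r \mu(d)/d^2$; extending this partial sum to the full series (with the tail bounded by $\sum_{d>r} 1/d^2 = O(1/r)$) gives $\frac{3}{\pi^2}r^2 + O(r)$. The error contribution is $O(r)\sum_{d=1}^r 1/d = O(r\log r)$, which proves (a).

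For part (b), I would apply Abel summation. Writing $\Phi(i) = \sum_{j\le i}\phi(j)$, one obtains
\[ \sum_{i=1}^r i\,\phi(i) = r\,\Phi(r) - \sum_{i=1}^{r-1}\Phi(i). \]
Substituting $\Phi(i) = \frac{3}{\pi^2}i^2 + O(i\log i)$ from (a), and using $\sum_{i=1}^{r-1} i^2 = r^3/3 + O(r^2)$, the right-hand side reduces to $\frac{3}{\pi^2}r^3 - \frac{1}{\pi^2}r^3 + O(r^2\log r) = \frac{2}{\pi^2}r^3 + O(r^2\log r)$, which is $\Theta(r^3)$. For part (c), the trivial bound $\phi(i) \le i$ already suffices: it yields $\phi(i)/i^2 \le 1/i$, so $\sum_{i=1}^r \phi(i)/i^2 \le H_r = O(\log r)$.

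The only nontrivial bookkeeping is in part (a): one must check that truncating the M\"obius series costs only $O(r)$ rather than $O(r^2)$ (which is where the value $6/\pi^2$ enters), and that the aggregated floor-function errors remain at $O(r\log r)$. Once (a) is in hand, (b) follows routinely via Abel summation and (c) is immediate from the trivial upper bound $\phi(i)\le i$, so no further obstacles arise.
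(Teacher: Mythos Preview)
Your argument is correct. For part~(c) you do exactly what the paper does. For part~(a) the paper simply cites Hardy--Wright rather than giving a proof, so your M\"obius-inversion derivation is a welcome self-contained replacement (and is in fact the standard argument behind that citation).

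The only genuine difference is in part~(b). You go through Abel summation and extract the sharp leading constant $\tfrac{2}{\pi^2}r^3$. The paper instead uses a cruder two-line sandwich: the upper bound $\sum_{i\le r} i\,\phi(i) \le r\sum_{i\le r}\phi(i) = O(r^3)$, and the lower bound obtained by keeping only the terms with $i>r/2$, namely $\sum_{i>r/2} i\,\phi(i) \ge (r/2)\bigl(\sum_{i\le r}\phi(i) - \sum_{i\le r/2}\phi(i)\bigr) = \Omega(r^3)$. Your approach buys the exact asymptotic constant (not needed downstream), while the paper's buys brevity and avoids any summation-by-parts bookkeeping. Both are fine for the lemma as stated.
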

\begin{proof}
Part (a) is a classic formula. For example, see \cite[Theorem 330]{HW79}.
For part (b), we note that
\begin{align*} 
\sum_{i=1}^r i\cdot \phi(i) &\le r\cdot \sum_{i=1}^r \phi(i) = O(r^3), \quad \text{ and that } \\
 \sum_{i=1}^r i\cdot \phi(i) &> \sum_{i=r/2}^r i\cdot \phi(i) > (r/2)\left(\sum_{i=1}^r \phi(i) - \sum_{i=1}^{r/2} \phi(i)\right) = \Omega(r^3).
\end{align*}
Combining the above bounds implies that $\sum_{i=1}^r i\cdot \phi(i) = \Theta(r^3)$.

By definition, $\phi(i)\le i$ holds for every positive integer $i$.
This implies that 
\[ \sum_{i=1}^r \phi(i)/i^2 \le \sum_{i=1}^r 1/i = O(\log n). \] 
\end{proof}

\parag{Rich lines.}
Let $\pts$ be a set of points in $\RR^2$. 
For an integer $r\ge 2$, a line $\ell\subset \RR^2$ is $r$-\emph{rich} with respect to $\pts$ if $\ell$ is incident to at least $r$ points of $\pts$.
The following result is often called a \emph{dual} Szemer\'edi--Trotter theorem.
This is because each of Theorems \ref{th:SzemTrot} and \ref{th:DualSzemTrot} can be derived from the other by only using elementary arguments. 

\begin{theorem} \label{th:DualSzemTrot}
Let $\pts$ be a set of $m$ points in $\RR^2$ and let $r$ be an integer larger than one.
Then, the number of $r$-rich lines with respect to $\pts$ is
\[ O\left(\frac{m^2}{r^3} + \frac{m}{r}\right).\]
\end{theorem}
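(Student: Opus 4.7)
The plan is to derive Theorem \ref{th:DualSzemTrot} directly from the Szemer\'edi--Trotter theorem (Theorem \ref{th:SzemTrot}) by a simple double-counting argument. Let $\lines$ be the set of all $r$-rich lines with respect to $\pts$, and set $n = |\lines|$. By definition, each line of $\lines$ contains at least $r$ points of $\pts$, so $I(\pts,\lines) \ge rn$. On the other hand, Theorem \ref{th:SzemTrot} gives $I(\pts,\lines) = O(m^{2/3} n^{2/3} + m + n)$. Combining these yields an inequality of the form
\[ rn \le C\bigl(m^{2/3}n^{2/3} + m + n\bigr) \]
for some absolute constant $C>0$.

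Next I would split into cases according to which of the three terms on the right-hand side is dominant (up to a factor of $3$). If the $m^{2/3}n^{2/3}$ term dominates, then $rn \le 3C m^{2/3}n^{2/3}$, so $n^{1/3} \le 3C m^{2/3}/r$, yielding $n = O(m^2/r^3)$. If the $m$ term dominates, then $rn \le 3Cm$, giving $n = O(m/r)$. These two cases already produce the bound claimed in the theorem.

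The only remaining case is when the $n$ term in Szemer\'edi--Trotter dominates, i.e.\ $rn \le 3Cn$. This forces $r \le 3C$, so we only need to worry about $r$ in a bounded range. In that regime I would just use the trivial bound $n \le \binom{m}{2} = O(m^2)$, which is $O(m^2/r^3)$ since $r$ is bounded by an absolute constant. Combining the three cases gives $n = O(m^2/r^3 + m/r)$, as required.

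I do not expect a serious obstacle here: the argument is a standard application of Szemer\'edi--Trotter, and the only mildly delicate point is handling the $n$ term on the right-hand side of Theorem \ref{th:SzemTrot}, which is resolved by observing that it can only dominate when $r$ is bounded and then invoking the trivial estimate.
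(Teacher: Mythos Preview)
Your argument is correct and is exactly the standard elementary derivation the paper has in mind: the paper does not supply its own proof of Theorem \ref{th:DualSzemTrot} but merely remarks that it and Theorem \ref{th:SzemTrot} can be obtained from one another by elementary arguments, and your double-counting plus case analysis is precisely that argument.
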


\section{Energies} \label{sec:Energies}

In this section we describe line energy in more detail.
We begin with a brief description of additive and multiplicative energies.
An expert reader may wish to skip this part.

\parag{Additive and multiplicative energies.}
Let $A\subset \RR$ be a finite set.
The \emph{sum set} of $A$ is
\[ A+A = \{a+a'\ :\ a,a'\in A \}. \]
It is not difficult to verify that $|A+A|=\Omega(|A|)$ and $|A+A|=O(|A|^2)$.
Intuitively, a small sum set implies that $A$ is similar to an arithmetic progression. 
The \emph{additive energy} of $A$ is 
\[ E^+(A) =|\{(a,b,c,d)\in A^4\ :\ a+b=c+d \}|. \]

After fixing the values of $a,b$, and $c$, at most one $d\in A$ satisfies $a+b=c+d$.
This implies that $E^+(A)\le |A|^3$.
By considering the case where $a=c$ and $b=d$, we get that $E^+(A)\ge |A|^2$.

Additive energy is a central object in additive combinatorics.
Intuitively, it provides information about the structure of $A$ under addition. 
We now consider one example of this.
For $x\in \RR$, we define 
\begin{align*} 
r_A^+(x) &= |\{(a,a')\in A^2\ :\ a+a'=x\}|, \\[2mm]
r_A^-(x) &= |\{(a,a')\in A^2\ :\ a-a'=x\}|.
\end{align*}

Since every pair from $A^2$ contributes to exactly one $r_A^+(x)$, we have that $\sum_{x\in \RR} r_A^+(x) = |A|^2$.
For a fixed $x\in \RR$, the number of solutions from $A$ to $a+b=c+d=x$ is $r_A^+(x)^2$.
This implies that $E^+(A) = \sum_{x} r_A^+(x)^2$.
Then, the Cauchy-Schwarz inequality leads to
\[ E^+(A) = \sum_{x\in A+A} r_A^+(x)^2 \ge \frac{\left(\sum_{x\in A+A} r_A^+(x) \right)^{2}}{\sum_{x\in A+A} 1} = \frac{(|A|^2)^2}{|A+A|} = \frac{|A|^4}{|A+A|}. \]
Thus, a small sum set implies a large additive energy. 
In particular, if $|A+A|=\Theta(|A|)$ then $E^+(A)=\Omega(|A|^3)$.


For finite $A,B\subset \RR$, we define the additive energy of $A$ and $B$ as
\[ E^+(A,B) = |\{(a,b,c,d)\in A\times B\times A \times B\ :\ a+b=c+d \}|. \]
We can rearrange the above condition as $a-c=d-b$.
Then, the Cauchy-Schwarz inequality implies that 
\begin{align} 
E^+(A,B) = \sum_{x\in \RR} r_A^-(x) \cdot r_B^-(x) &\le \Big(\sum_{x\in \RR} r_A^-(x)^2\Big)^{1/2} \cdot \Big(\sum_{x\in \RR} r_B^-(x)^2\Big)^{1/2} \nonumber \\[2mm]
&\hspace{30mm} = E^+(A)^{1/2}\cdot E^+(B)^{1/2}. \label{eq:BipartiteEnergy}
\end{align}

The \emph{product set} of $A$ is
\[ AA = \{a\cdot a'\ :\ a,a'\in A \}. \]
The \emph{multiplicative energy} of $A$ is 
\begin{equation} \label{eq:MultEnergy} 
E^\times(A) =|\{(a,b,c,d)\in A^4\ :\ a\cdot b=c\cdot d \}|. 
\end{equation}

Product sets and multiplicative energies are similar to sum sets and additive energies.
For example, a small product set implies a large multiplicative energy.
Intuitively, a set with a small product set is similar to a geometric progression.

\parag{Line energy.}
In the following, when considering lines, we refer only to non-axis-parallel lines in $\RR^2$.
We associate a line $\ell$ with the function $f_\ell(x)=cx+d$.
The \emph{composition} of the lines $\ell$ and $\ell'$ that are defined by $y=cx+d$ and $y=c'x+d'$ is
\[ \ell \circ \ell' = f_\ell\circ f_{\ell'} = c\cdot f_{\ell'}(x) + d = cc'x+cd'+d. \]

The set of non-axis-parallel lines under the above composition form the \emph{affine group} of $\RR$.
However, for our purposes, we prefer to think of the group elements as lines in $\RR^2$. 
We also associate a line $\ell$ that is defined by $y=cx+d$ with the point $(c,d)\in \RR^2$.
We refer to the plane that contains $\ell$ as the \emph{primal plane} and to the plane that contains $(c,d)$ as the \emph{dual plane}.
With this dual notation, the group operation is 
\begin{equation} \label{eq:GroupOper} 
(c,d) \circ (c',d') = (cc',cd'+d). 
\end{equation}

The identity element is $(1,0)$ and the inverse of $(c,d)$ is $(1/c,-d/c)$.
This group is non-commutative. 
For example, we have that
\[ (2,3) \circ (2,4) = (4,11) \neq (4,10) = (2,4) \circ (2,3). \]

We define the \emph{line energy} of a set of lines $\lines$ as
\[ E(\lines) = \left|\left\{(\ell_1,\ell_2,\ell_3,\ell_4)\in \lines^4\ :\ f_{\ell_1}^{-1}\circ f_{\ell_2} = f_{\ell_3}^{-1}\circ f_{\ell_4} \right\} \right|. \]
When writing $f_{\ell_j} = c_jx+d_j$, we also have the dual formulation
\[ E(\lines) = \left|\left\{(\ell_1,\ell_2,\ell_3,\ell_4)\in \lines^4\ :\ (c_1,d_1)^{-1}\circ (c_2,d_2) = (c_3,d_3)^{-1}\circ (c_4,d_4) \right\} \right|. \]

Rudnev and Shkredov \cite{RS18} refer to $E(\lines)$ as an energy, but do not give it an explicit name.
The recent paper \cite{PRNRW20} refers to $E(\lines)$ as \emph{affine energy}, because of its connection with the affine group. 
We use the name line energy, since think of $E(\lines)$ as a property of a set of lines. 

We note that 
\[ (c,d)^{-1}\circ (c',d') = \left(\frac{1}{c},\frac{-d}{c}\right)\circ (c',d') = \left(\frac{c'}{c}, \frac{d'-d}{c}\right). \]
Since we only consider non-axis-parallel lines, $c\neq0$ and the above is well-defined. 
A quadruple $(\ell_1,\ell_2,\ell_3,\ell_4)\in \lines^4$ contributes to $E(\lines)$ if and only if 
\[ \left(\frac{c_2}{c_1}, \frac{d_2-d_1}{c_2}\right) = \left(\frac{c_4}{c_3}, \frac{d_4-d_3}{c_4}\right). \]
Simplifying leads to the system 
\begin{align}  
c_2 \cdot c_3 &= c_1\cdot c_4, \label{eq:LineQuadrupleCond1} \\
c_3(d_2-d_1) &= c_1(d_4-d_3). \label{eq:LineQuadrupleCond2}
\end{align}

\subsection{Cartesian products of lines} 
In the dual plane, we can consider a set of lines that is a Cartesian product.
That is, a Cartesian product $C\times D$ consists of the points that are dual to lines with a slope from $C$ and a $y$-intercept from $D$.
The following bound for Cartesian products of lines is a warm-up towards similar arguments that we use in the following sections. 

\begin{theorem} \label{th:LineCartesianEnergyUpper}
Consider finite sets $C,D\subset \RR\setminus\{0\}$.
Let $\lines$ be a set of lines in $\RR^2$ that is dual to $C\times D$.  
Then
\begin{equation} \label{eq:TwoEnergiesBound} 
E(C\times D) \le E^\times(C)\cdot E^+(D). 
\end{equation}
\end{theorem}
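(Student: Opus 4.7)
The plan is to decouple the two conditions \eqref{eq:LineQuadrupleCond1} and \eqref{eq:LineQuadrupleCond2} that characterize a quadruple contributing to $E(\lines)$. Since $\lines$ is dual to $C\times D$, every slope $c_i$ lies in $C$ and every intercept $d_i$ lies in $D$. For each fixed slope-quadruple $(c_1,c_2,c_3,c_4)\in C^4$, let
\[ N(c_1,c_2,c_3,c_4) = \left|\left\{(d_1,d_2,d_3,d_4)\in D^4\ :\ c_3(d_2-d_1)=c_1(d_4-d_3)\right\}\right|. \]
Then, by \eqref{eq:LineQuadrupleCond1} and \eqref{eq:LineQuadrupleCond2},
\[ E(C\times D) = \sum_{\substack{(c_1,c_2,c_3,c_4)\in C^4 \\ c_2c_3 = c_1c_4}} N(c_1,c_2,c_3,c_4). \]

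The key step is to show $N(c_1,c_2,c_3,c_4)\le E^+(D)$ for every fixed quadruple of nonzero slopes. Let $r_D^-(x)=|\{(d,d')\in D^2:d-d'=x\}|$. Setting $u=d_2-d_1$ and $v=d_4-d_3$, the intercept condition becomes $c_3 u = c_1 v$, so
\[ N(c_1,c_2,c_3,c_4) = \sum_{v\in \RR} r_D^-\!\left(\tfrac{c_1}{c_3}v\right) r_D^-(v). \]
By the Cauchy--Schwarz inequality and the substitution $w=(c_1/c_3)v$,
\[ N(c_1,c_2,c_3,c_4) \le \left(\sum_{v}r_D^-(v)^2\right)^{\!1/2}\!\!\left(\sum_{w}r_D^-(w)^2\right)^{\!1/2}. \]
As noted in Section \ref{sec:Energies}, $\sum_{x}r_D^-(x)^2 = |\{(a,b,c,d)\in D^4:a-b=c-d\}| = E^+(D)$, so $N(c_1,c_2,c_3,c_4)\le E^+(D)$.

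To finish, the number of slope-quadruples satisfying $c_2c_3=c_1c_4$ is exactly $E^\times(C)$ by definition \eqref{eq:MultEnergy}. Summing the bound on $N$ over these quadruples yields $E(C\times D)\le E^\times(C)\cdot E^+(D)$. The only mildly delicate step is the Cauchy--Schwarz bound on $N$, since one must verify that both factors equal $E^+(D)$ independently of the ratio $c_1/c_3$; this is where using non-axis-parallel lines (so that $c_1,c_3\neq 0$) and a change of variables make the argument work cleanly.
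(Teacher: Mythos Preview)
Your proof is correct and follows essentially the same approach as the paper: both fix a slope-quadruple satisfying \eqref{eq:LineQuadrupleCond1}, express the count of intercept-quadruples satisfying \eqref{eq:LineQuadrupleCond2} as $\sum_v r_D^-(sv)\,r_D^-(v)$ for $s=c_1/c_3$, and bound this by $E^+(D)$ via Cauchy--Schwarz and a change of variables. The only difference is notational---you name the inner count $N(c_1,c_2,c_3,c_4)$ explicitly, while the paper just sets $s=c_1/c_3$ and works directly with the sum.
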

\begin{proof}
The number of solutions to \eqref{eq:LineQuadrupleCond1} is $E^\times(C)$.
We fix a solution to \eqref{eq:LineQuadrupleCond1} and derive an upper bound on the number of corresponding solutions to \eqref{eq:LineQuadrupleCond2}.
That is, we fix $c_1,c_2,c_3,c_4$, and consider the number of valid values for $d_1,d_2,d_3,d_4$.

We set $s = c_1/c_3$ and rephrase \eqref{eq:LineQuadrupleCond2} as 
\begin{equation} \label{eq:LineQuadrupleCond2rev} 
d_2-d_1 = s(d_4-d_3).
\end{equation}

By the Cauchy-Schwarz inequality, the number of solutions to \eqref{eq:LineQuadrupleCond2rev} is 
\begin{align*} 
\sum_{k\in \RR}r^-_D(k) r^-_D(k/s) &\le \left(\sum_{k\in \RR}r^-_D(k)^2\right)^{1/2} \left(\sum_{k\in \RR}r^-_D(k/s)^2\right)^{1/2} \\[2mm]
&\hspace{50mm} = \left(E^+(D)\right)^{1/2} \left(E^+(D)\right)^{1/2} = E^+(D).
\end{align*}

There are $E^\times(C)$ solutions to \eqref{eq:LineQuadrupleCond1} and each corresponds to at most $E^+(D)$ solutions to \eqref{eq:LineQuadrupleCond2}. This implies that $E(C\times D) \le E^\times(C)\cdot E^+(D)$.
\end{proof}

To put Theorem \ref{th:LineCartesianEnergyUpper} in context, we consider Elekes's construction from \cite{Elek02}. 
In particular, we consider the point set
\[ \pts = \left\{\ (i,j)\ :\ 1 \le i \le n^{1/3}/2 \quad \text{ and } \quad 1 \le j \le 2n^{2/3} \ \right\}, \]
and the set of lines
\begin{equation} \label{eq:ElekesLines} 
\lines = \left\{\ y=ax+b \ :\ 1 \le a \le n^{1/3} \quad \text{ and } \quad 1 \le b \le n^{2/3} \ \right\}. 
\end{equation}

We note that $|\pts|=|\lines|=n$, that $I(\pts,\lines)=\Theta(n^{4/3})$, and that the set of lines is a Cartesian product.

\begin{claim} \label{cl:ElekesEnergy}
The set $\lines$ from \eqref{eq:ElekesLines} satisfies that $E(\lines) = O(n^{8/3}\log n)$.
This is tight, possibly up to the logarithmic factor. 
\end{claim}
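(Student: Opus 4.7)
The plan is to derive the upper bound as a direct application of Theorem \ref{th:LineCartesianEnergyUpper}, and to establish the matching lower bound by a simple counting of a restricted family of quadruples. Write $C=\{1,2,\ldots,n^{1/3}\}$ and $D=\{1,2,\ldots,n^{2/3}\}$, so that $\lines$ is dual to $C\times D$ and \eqref{eq:TwoEnergiesBound} yields $E(\lines)\le E^\times(C)\cdot E^+(D)$. Since $D$ is an arithmetic progression, $E^+(D)=\Theta(|D|^3)=\Theta(n^2)$, so the whole game reduces to bounding $E^\times(C)=E^\times([N])$ with $N=n^{1/3}$.

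To bound $E^\times([N])$, I would parametrize solutions to $ab=cd$ by the reduced ratio: write $a/c=p/q$ with $\gcd(p,q)=1$, which forces $b/d=q/p$. The number of pairs $(a,c)\in[N]^2$ (respectively $(b,d)\in[N]^2$) realizing a fixed coprime ratio $p/q$ equals $\lfloor N/\max(p,q)\rfloor$. Grouping by $m=\max(p,q)$, for each $m\ge 2$ there are exactly $2\phi(m)$ coprime pairs with this maximum, so
\[
E^\times([N])\;=\;N^2+\sum_{m=2}^{N}2\phi(m)\Big\lfloor\frac{N}{m}\Big\rfloor^2\;\le\;N^2+2N^2\sum_{m=1}^{N}\frac{\phi(m)}{m^2}.
\]
Lemma \ref{le:totient}(c) gives $\sum_{m\le N}\phi(m)/m^2=O(\log N)$, hence $E^\times(C)=O(n^{2/3}\log n)$. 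Combining with the bound on $E^+(D)$ produces $E(\lines)=O(n^{8/3}\log n)$, as required.

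For the lower bound I would simply count quadruples with $c_1=c_3$ and $c_2=c_4$; such choices automatically satisfy \eqref{eq:LineQuadrupleCond1}, and \eqref{eq:LineQuadrupleCond2} reduces to $d_2-d_1=d_4-d_3$, i.e.\ to counting additive quadruples in $D$. There are $|C|^2$ choices of the slopes and $E^+(D)=\Theta(n^2)$ choices of the intercepts, giving $E(\lines)\ge |C|^2\cdot E^+(D)=\Omega(n^{2/3}\cdot n^2)=\Omega(n^{8/3})$. This matches the upper bound up to the logarithmic factor.

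The main obstacle, such as it is, is the sharp estimate for $E^\times([N])$: the Cartesian-product bound \eqref{eq:TwoEnergiesBound} and the arithmetic-progression estimate for $E^+(D)$ are immediate, but one has to carefully pass from the naive bound $E^\times([N])\le N^2\cdot \max_x r_C^\times(x)$ (which loses a divisor-function factor) to the sharper $N^2\log N$ estimate obtained via the coprime-ratio parametrization and Lemma \ref{le:totient}(c). Everything else in the argument is a routine combination of the tools already assembled in this section.
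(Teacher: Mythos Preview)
Your proposal is correct and follows essentially the same route as the paper: both apply Theorem~\ref{th:LineCartesianEnergyUpper}, bound $E^\times(\{1,\dots,N\})$ via the coprime-ratio parametrization together with Lemma~\ref{le:totient}(c), use $E^+(D)=\Theta(n^2)$, and obtain the lower bound by restricting to quadruples with $c_1=c_3$, $c_2=c_4$. The only cosmetic difference is that you group coprime pairs by $m=\max(p,q)$ and count $2\phi(m)$ of them, whereas the paper writes the ratio as $t/s$ with $t<s$ and carries an explicit factor of~$2$ for the symmetry; these are the same computation.
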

\begin{proof}
We write $C=\{1,2,\ldots,n^{1/3}\}$ and $D=\{1,2,\ldots,n^{2/3}\}$.
Then $\lines$ is dual to the Cartesian product $C\times D$.
We define $r_C^\div(x) = |\{(c,c')\in C^2\ :\ c/ c' =x\}|$.
Since $0\notin C$, the condition $a\cdot b = c\cdot d$ from \eqref{eq:MultEnergy} is equivalent to $a/c=d/b$.
This implies that $E^\times(C) = \sum_x r_C^\div(x)^2$.
We note that $r_C^\div(1)^2 = n^{2/3}$ and that $r_C^\div(x) = r_C^\div(1/x)$ for every $x$.
This implies that 
\begin{align*}
E^\times(C) &= O(n^{2/3})+ 2\sum_{s=1}^{n^{1/3}}\sum_{1\le t <s \atop \GCD(s,t)=1} r_C^\div(t/s)^2 = O(n^{2/3})+ 2 \sum_{s=1}^{n^{1/3}}\sum_{1\le t <s \atop \GCD(s,t)=1}  \left(\frac{n^{1/3}}{s}\right)^2. 
\end{align*}

In the final transition above, we note that every $(c,c')\in C$ that satisfy $c/c' = t/s$ can be written as $c=at$ and $c'=as$ where $1\le a\le n^{1/3}/s$.
Thus, the number of representations of $t/s$ as $c/c'$ is $\lfloor n^{1/3}/s \rfloor$.
For a fixed $s$, the sum over $t$ contains $\phi(s)$ identical elements.
We may thus write
\begin{align*} 
E^\times(C) \le O(n^{2/3})+ 2 \sum_{s=1}^{n^{1/3}} \phi(s) \left(\frac{n^{1/3}}{s}\right)^2 = O(n^{2/3})+ 2 n^{2/3}\sum_{s=1}^{n^{1/3}} \frac{\phi(s)}{s^2} = O(n^{2/3}\log n).
\end{align*}
In the last transition, we applied Lemma \ref{le:totient}(c).

Since $D$ is an arithmetic progression, we have that $E^+(D) = \Theta(|D|^3)= \Theta(n^2)$. 
Theorem \ref{th:LineCartesianEnergyUpper} implies that 
\[ E(\lines)\le E^\times(C)\cdot E^+(D) = O(n^{2/3}\log n) \cdot\Theta(n^2) = O(n^{8/3}\log n). \]

To show that the above bound is close to tight, we claim that there are many solutions to \eqref{eq:LineQuadrupleCond1} and \eqref{eq:LineQuadrupleCond2}.
We rephrase \eqref{eq:LineQuadrupleCond1} as $c_2/c_4 = c_1/c_3$.
There are $n^{2/3}$ solutions in $C$ to $c_2/c_4 = c_1/c_3=1$. 
For each of these solutions, \eqref{eq:LineQuadrupleCond2} becomes $d_2-d_1 = d_4-d_3$.
The number of solutions of this equation is $E^+(D) = \Theta(n^2)$.
Thus, there are $\Omega(n^{8/3})$ solutions to \eqref{eq:LineQuadrupleCond1} and \eqref{eq:LineQuadrupleCond2}.
In other words, $E(\lines)=\Omega(n^{8/3})$.
\end{proof}

Claim \ref{cl:ElekesEnergy} shows that Theorem \ref{th:LineCartesianEnergyUpper} is tight, possibly up to a logarithmic factor. 
Combining Claim \ref{cl:ElekesEnergy} with Theorem \ref{th:IncidencesViaLineEnergy} leads to
\[ I(\pts,\lines) = O\left(n^{1/3}n^{2/9}(n^{8/3}\log n)^{1/6}n^{1/3} + n^{1/3}n\right) = O(n^{4/3}\log^{1/6} n). \]
This implies that the bound of Theorem \ref{th:IncidencesViaLineEnergy} is tight, possibly up to a logarithmic factor. 
However, it is possible that this tightness is achieved by the less interesting term $|B|^{1/2}|\lines|$.
In Theorem \ref{th:tightRS}, we show that the main term of the bound of Theorem \ref{th:IncidencesViaLineEnergy} is tight up to sub-polynomial factors. 

\section{New constructions} \label{sec:constructions}

In this section we prove Theorem \ref{th:FamilyConst} and Theorem \ref{th:tightRS}.
We first recall the statement of each theorem.
\vspace{2mm}

\noindent {\bf Theorem \ref{th:FamilyConst}.}
\emph{The following holds for every $1/3<\alpha<1/2$.
Let $\pts$ be a section of the integer lattice $\ZZ^2$ of size $n^\alpha \times n^{1-\alpha}$.
Then there exists a set $\lines$ of $n$ lines such that }
\[ I(\pts,\lines) = \Theta(n^{4/3}). \]
\begin{proof}
We may assume that 
\begin{equation} \label{eq:ConstFamilyPts}
    \pts = \bigg\{ (i,j)\ :\ 0 \leq i < n^{\alpha} \quad  \text{  and  } \quad -n^{1-\alpha}/2 < j \leq n^{1-\alpha}/2 \bigg\}.
\end{equation}
If $\pts$ is a different section of $\ZZ^2$ of size $n^\alpha \times n^{1-\alpha}$, then we obtain \eqref{eq:ConstFamilyPts} after a translation of $\RR^2$.
Such a translation does not affect the number of incidences.

We consider the set of lines 
\begin{align}
    \lines = \bigg\{ y = \bigg(a + \frac{b}{c}&\bigg) \cdot (x-i) + d\ :\ (a,b,c,d) \in \mathbb{N}^4,\ 0 \leq a < n^{1-2\alpha}/4, \nonumber \\
    & 1 \leq b <c \leq n^{\alpha - 1/3},\ \GCD(b,c) = 1,\ 0 \leq i < c,\ 0 \leq d < n^{1-\alpha}/4 \bigg\}. \label{eq:GenLinesConst}
\end{align}

In \eqref{eq:GenLinesConst}, after fixing $1 <c \leq n^{\alpha - 1/3}$, there are $c$ possible values for $i$ and $\phi(c)$ possible values for $b$. 
Lemma \ref{le:totient}(b) implies that 
\[ |\lines| = \sum_{c=2}^{n^{\alpha-1/3}} \Big( \frac{n^{1-2\alpha}}{4}\cdot \frac{n^{1-\alpha}}{4} \cdot c \cdot \phi(c)\Big) = \frac{n^{2-3\alpha}}{16}\cdot\sum_{c=2}^{n^{\alpha-1/3}} (c \cdot \phi(c)) = \frac{n^{2-3\alpha}}{16} \cdot \Theta((n^{\alpha - 1/3})^3) = \Theta(n). \]
At the end of the proof, we revise $\lines$ to ensure that $|\lines|=n$.

We consider $a,b,c,d,i$ that satisfy the restrictions in \eqref{eq:GenLinesConst} and also $0 \leq x < n^{\alpha}$.
Then 
\begin{align} 
y &= \left(a + \frac{b}{c}\right) (x-i) + d < \frac{n^{1-2\alpha}}{4} \cdot n^{\alpha} + \frac{n^{1-\alpha}}{4} = \frac{n^{1-\alpha}}{2}, \nonumber \\
y &= \left(a + \frac{b}{c}\right) (x-i) + d > \frac{n^{1-2\alpha}}{4}\cdot (-n^{\alpha-1/3}) = \frac{-n^{2/3-\alpha}}{4}. \label{eq:ConstYvalues}
\end{align}
Thus, $y$ is always in the range of the $y$-coordinates of the points from \eqref{eq:ConstFamilyPts}, although $y$ may not be an integer.

Consider a line $\ell\in \lines$.
We note that $\left(a + \frac{b}{c}\right) (x-i) + d$ is an integer for every $c$'th value of $x\in \{0,1,\ldots,n^{\alpha}-1\}$. 
This implies that $\ell$ is incident to a point from every $c$'th column of $\pts$. 
By combining this with \eqref{eq:ConstYvalues}, we get that $\ell$ is incident to $\Theta(n^{\alpha}/c)$ points of $\pts$. 

For $1 <c \leq n^{\alpha - 1/3}$, let $\lines_c$ be the set of lines of $\lines$ that are defined with this $c$.
By inspecting \eqref{eq:GenLinesConst}, we note that 
\[ |\lines_c| = \frac{n^{1-2\alpha}}{4} \cdot \frac{n^{1-\alpha}}{4} \cdot c \cdot \phi(c) = \frac{n^{2-3\alpha} \cdot c \cdot \phi(c)}{16}. \]
Lemma \ref{le:totient}(a) implies that 
\begin{align*}
    I(\pts, \lines) = \sum_{c=2}^{n^{\alpha - 1/3}} |\lines_c| \cdot \Theta\left(\frac{n^{\alpha}}{c}\right) &= \sum_{c=2}^{n^{\alpha - 1/3}}\Theta\left(n^{2-2\alpha}\cdot \phi(c)\right) \\
    &= \Theta\left(n^{2-2\alpha}\sum_{c=2}^{n^{\alpha - 1/3}}\phi(c)\right) = \Theta(n^{4/3}).
\end{align*}

We still need to change $\lines$ so that $|\lines|=n$.
We recall that $|\lines|=\Theta(n)$.
If $|\lines|< n$ then we add arbitrary lines to $\lines$ until $|\lines|=n$.
If $|\lines|> n$ then we repeatedly remove a line with the smallest number of incidences, until $|\lines|=n$.
In either case, the number of incidences is asymptotically unchanged.
\end{proof}

Repeating the above construction with $\alpha = 1/2$ leads to Erd\H{o}s's construction. 
Repeating it with $\alpha = 1/3$ leads to Elekes's construction.

The following lemma provides additional properties of the set of lines that was introduced in Theorem \ref{th:FamilyConst}.

\begin{lemma} \label{le:SlopesMultEnergy}
Let $S$ be the set of slopes of the lines from \eqref{eq:GenLinesConst}.
Then $|S| = \Theta(n^{1/3})$ and, for every $\eps>0$, we have that
\[ E^\times(S) = O(n^{2/3+\eps}). \]
\end{lemma}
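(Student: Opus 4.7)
My plan is to prove the two claims separately. For the cardinality, I will establish that each slope in $S$ corresponds uniquely to a parameter triple, then apply Lemma \ref{le:totient}(a). For the multiplicative energy, I will translate the multiplicative equation on rationals into an integer factorization problem and invoke the divisor bound on ordered 4-factorizations.

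Setting $R = n^{\alpha - 1/3}$ and $M = n^{1-2\alpha}/4$, I would first observe that since $1 \le b < c$ forces $b/c \in (0, 1)$, and since $\GCD(b, c) = 1$ forces this fraction to be in lowest terms, every slope $s = a + b/c \in S$ uniquely determines $a, b, c$ (as $a = \lfloor s \rfloor$ and $b/c$ is the reduced fractional part). Therefore $|S|$ equals the number of admissible triples, namely $\lfloor M \rfloor \cdot \sum_{c=2}^{\lfloor R \rfloor} \phi(c)$. Lemma \ref{le:totient}(a) gives this sum as $\Theta(R^2) = \Theta(n^{2\alpha - 2/3})$, so $|S| = \Theta(M R^2) = \Theta(n^{1/3})$.

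For the multiplicative energy, the key observation is that $s = a + b/c = (ac + b)/c$ is already in lowest terms, since $\GCD(ac + b, c) = \GCD(b, c) = 1$. Writing $s_i = p_i / q_i$ with $p_i = a_i c_i + b_i$ and $q_i = c_i$, the identity $s_1 s_2 = s_3 s_4$ becomes the integer equation
\[ p_1 p_2 q_3 q_4 = p_3 p_4 q_1 q_2 =: N. \]
Using $q_i \le R$ and $p_i < q_i (M + 1)$, this common value satisfies $N = O(R^4 M^2) = O(n^{2/3})$.

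Each quadruple contributing to $E^\times(S)$ determines one such $N$ together with an ordered factorization $N = p_1 p_2 q_3 q_4$ and an ordered factorization $N = p_3 p_4 q_1 q_2$. Since the number of ordered 4-factorizations of $N$ is $d_4(N)$, we obtain
\[ E^\times(S) \le \sum_{N = 1}^{O(n^{2/3})} d_4(N)^2. \]
By the standard divisor bound $d_4(N) = O_\eps(N^\eps)$, this sum is $O(n^{2/3 + \eps})$ for every $\eps > 0$, as required. The only delicate point is recognizing that $\GCD(b, c) = 1$ already guarantees that $(p, q) = (ac + b, c)$ is the reduced form, which is what allows the clean translation into a 4-factorization problem; after that the argument is essentially bookkeeping, and the numerology $R^4 M^2 = n^{2/3}$ falls out exactly as needed.
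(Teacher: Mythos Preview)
Your proof is correct, and the cardinality argument is essentially identical to the paper's. For the energy bound, however, you take a genuinely different and cleaner route.

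The paper works in the product set $SS$: it fixes a target $s=p/q\in SS$ in reduced form, bounds the representation function $r_S^\times(s)$ by writing each representation as $\frac{p_1k}{q_1\ell'}\cdot\frac{p_2\ell}{q_2k'}$ with $p_1p_2=p$, $q_1q_2=q$, $k\ell=k'\ell'$, and then applies the divisor bound to each of these factorizations. Because the size of the leftover factor $k\ell$ depends on $\max\{p,q\}$, the paper needs a dyadic decomposition of $SS$ by this quantity and sums over the dyadic levels.

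You instead cross-multiply the equation $s_1s_2=s_3s_4$ into a single integer identity $p_1p_2q_3q_4=p_3p_4q_1q_2=:N$, observe that $N=O(R^4M^2)=O(n^{2/3})$, and note that the map from quadruples to pairs of ordered $4$-factorizations of $N$ is injective (using that each $p_i/q_i$ is already reduced). This replaces the entire dyadic machinery by the single estimate $\sum_{N\le X}d_4(N)^2=O_\eps(X^{1+\eps})$. Your approach is shorter and more transparent; the paper's decomposition, on the other hand, tracks the actual representation count $r_S^\times(s)$ pointwise, which is a slightly finer piece of information even though it is not needed here.
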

\begin{proof}
By inspecting \eqref{eq:GenLinesConst}, we get that
\begin{equation} \label{eq:NewFamSlopes}
S=  \bigg\{ a + \frac{b}{c}\ :\ (a,b,c) \in \mathbb{N}^3,\ 0 \leq a < \frac{n^{1-2\alpha}}{4},\ 1 \leq b <c \leq n^{\alpha - 1/3},\ \GCD(b,c) = 1 \bigg\}. 
\end{equation}
Lemma \ref{le:totient}(a) implies that
\[ |S| = \frac{n^{1-2\alpha}}{4} \cdot \sum_{c=2}^{n^{\alpha-1/3}}\phi(c) = \Theta(n^{1-2\alpha} \cdot n^{2\alpha-2/3}) = \Theta(n^{1/3}). \]

Let $q$ be a positive integer.
Let $A$ be a set of rational numbers, where all the denominators and numerators are at most $q$.  
Theorem 3 of Shteinikov \cite{shteinikov17} states that
\[ E^{\times}(A) \le |A|^2\cdot q^{O(1/\log\log q)}. \]

We note that every element of $S$ can be written as a rational number with the denominator being at most $n^{\alpha-1/3}$ and the numerator at most $n^{2/3-\alpha}$.
Theorem 3 of Shteinikov \cite{shteinikov17} states that, for every $\eps>0$, 
\[ E^{\times}(S) \le |S|^2 \cdot n^{O(1/\log\log n)} = O(n^{2/3+\eps}). \]
\end{proof}

We are now ready to prove Theorem \ref{th:tightRS}.
\vspace{2mm}

\noindent {\bf Theorem \ref{th:tightRS}.}
\emph{For every $\eps>0$, in the first term of the bound of Theorem \ref{th:IncidencesViaLineEnergy}, no exponent could be decreased by an $\eps$.}
\begin{proof}
Let $\alpha = 1/3+\eps^*$, where the value of $\eps^*>0$ is determined below.
With this value of $\alpha$, we consider the point set $\pts$ from \eqref{eq:ConstFamilyPts} and the line set $\lines$ from \eqref{eq:GenLinesConst}.
We recall that $I(\pts,\lines)=\Theta(n^{4/3})$ and that $\pts$ is a section of the integer lattice of size $n^\alpha\times n^{1-\alpha}$.

Assume for contraction that the bound of Theorem \ref{th:IncidencesViaLineEnergy}(a) holds after decreasing one of the exponents of the first term by $\eps$.
We apply this improved bound on $\pts$ and $\lines$, to obtain that 
\[ I(\pts,\lines) = O\left(n^{(1-\alpha)/2}n^{2\alpha/3}E(\lines)^{1/6}n^{1/3}/n^{\eps/10} + n^{(1-\alpha)/2}n\right). \]

Since $\alpha>1/3$, the second term of the above bound is asymptotically smaller than $n^{4/3}$.
Since $I(\pts,\lines)=\Theta(n^{4/3})$, this term cannot dominate the bound.
This leads to 
\begin{equation*} 
n^{4/3} =O(n^{\frac{5+\alpha}{6}-\frac{\eps}{10}}\cdot E(\lines)^{1/6}). 
\end{equation*}
Rearranging and recalling that $\alpha = 1/3+\eps^*$ gives that 
\begin{equation} \label{eq:ShkredovRudnevTightnessUpperEnergy}
E(\lines) = \Omega(n^{3-\alpha+3\eps/5}) = \Omega(n^{8/3-\eps^*+3\eps/5}).
\end{equation}

To derive an upper bound on $E(\lines)$, we use a variant of the proof of Theorem \ref{th:LineCartesianEnergyUpper}.
Let $S$ be the set of slopes of $\lines$, as defined in \eqref{eq:NewFamSlopes}. 
Lemma \ref{le:SlopesMultEnergy} implies that $E^\times(S) = O(n^{2/3+\eps^*})$.
In other words, there are $O(n^{2/3+\eps^*})$ solutions to \eqref{eq:LineQuadrupleCond1} when $(c_1,c_2,c_3,c_4)\in S^4$.
By inspecting \eqref{eq:GenLinesConst}, we note that $O(n^{2/3})$ lines of $\lines$ have the same slope. 
Thus, for every solution of \eqref{eq:LineQuadrupleCond1}, there are $O(n^2)$ possible values for $d_1,d_2,d_3$. 
After fixing those, at most one value of $d_4$ satisfies \eqref{eq:LineQuadrupleCond2}.
This implies that 
\[ E(\lines) = O(n^{2/3+\eps^*}\cdot n^2) = O(n^{8/3+\eps^*}). \]

By setting $\eps^*<3\eps/10$, we get a contradiction to \eqref{eq:ShkredovRudnevTightnessUpperEnergy}. 
Indeed, in this case $8/3+\eps^*<8/3-\eps^*+3\eps/5$.
\end{proof}

\section{The main structural result} \label{sec:MainStructural}

In this section we prove Theorem \ref{th:StructuralSzemTrot}.
Our proof requires the following lemma. 

\begin{lemma} \label{le:ThirdRichness}
Consider a set $\pts$ of $n$ points and a set $\lines$ of $n$ lines, both in $\RR^2$, such that $I(\pts,\lines)=\Theta(n^{4/3})$.
Then there exists a subset $\lines' \subseteq \lines$ such that $|\lines'|=\Theta(n)$, $I(\pts,\lines')=\Theta(n^{4/3})$, and every line of $\lines'$ is incident to $\Theta(n^{1/3})$ points of $\pts$.
\end{lemma}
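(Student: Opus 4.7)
The plan is to prove Lemma \ref{le:ThirdRichness} by a standard dyadic pigeonhole argument, using the dual Szemer\'edi--Trotter theorem (Theorem \ref{th:DualSzemTrot}) to control the number of lines at each richness level. For each integer $j\ge 0$, let $i(\ell)$ denote the number of points of $\pts$ incident to $\ell$, and let $\lines_j=\{\ell\in \lines\ :\ 2^j\le i(\ell)<2^{j+1}\}$. Then Theorem \ref{th:DualSzemTrot} gives $|\lines_j|=O(n^2/2^{3j}+n/2^j)$, so the incidence contribution of $\lines_j$ is at most $2^{j+1}|\lines_j|=O(n^2/2^{2j}+n)$.

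Next I would separate the lines into three ranges according to whether $i(\ell)$ is small, medium, or large compared to $n^{1/3}$. For a small constant $c>0$ to be chosen, the lines with $i(\ell)<cn^{1/3}$ contribute at most $cn^{1/3}\cdot|\lines|\le cn^{4/3}$ incidences. For a large constant $C>0$, summing the bound $O(n^2/2^{2j}+n)$ over all dyadic levels with $2^j>Cn^{1/3}$ yields a geometric sum $O(n^{4/3}/C^2)$ plus $O(n\log n)=o(n^{4/3})$. Choosing $c$ small enough and $C$ large enough, each of these two extreme ranges contributes at most, say, $\tfrac{1}{4}\cdot I(\pts,\lines)$ incidences, so the lines with $cn^{1/3}\le i(\ell)\le Cn^{1/3}$ must account for $\Omega(n^{4/3})$ incidences.

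The medium range contains only $O(\log(C/c))=O(1)$ dyadic levels, so one further application of the pigeonhole principle produces a single level $j^*$ with $2^{j^*}=\Theta(n^{1/3})$ whose corresponding set $\lines':=\lines_{j^*}$ still contains $\Omega(n^{4/3})$ incidences. By construction every line of $\lines'$ is incident to $\Theta(n^{1/3})$ points of $\pts$. The two-sided bound on $|\lines'|$ then follows immediately: $|\lines'|\le I(\pts,\lines')/2^{j^*}=O(n^{4/3}/n^{1/3})=O(n)$, and $|\lines'|\ge I(\pts,\lines')/2^{j^*+1}=\Omega(n)$, giving $|\lines'|=\Theta(n)$ as required.

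There is no serious obstacle here; the argument is routine once the constants $c$ and $C$ are chosen so that the extreme richness regimes are genuinely negligible compared with $n^{4/3}$. The only place one has to be a little careful is confirming that the $n\log n$ tail from summing the $n/r$ term in Theorem \ref{th:DualSzemTrot} over $O(\log n)$ dyadic scales is indeed $o(n^{4/3})$, which holds for all sufficiently large $n$.
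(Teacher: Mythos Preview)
Your proof is correct and follows essentially the same approach as the paper: remove the lines with too few or too many incidences using the dual Szemer\'edi--Trotter bound, and observe that the remaining ``medium-rich'' lines still carry $\Theta(n^{4/3})$ incidences. The only cosmetic differences are that the paper bounds the high-richness contribution via the primal Szemer\'edi--Trotter theorem applied to the small set $\lines_2$ rather than by a dyadic sum, and your final pigeonhole to a single dyadic level is unnecessary since the entire medium range already satisfies $i(\ell)=\Theta(n^{1/3})$.
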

\begin{proof}
Recall that $o(\cdot)$ means ``asymptotically strictly smaller than."
We write $I(\pts,\lines)=c_1n^{4/3} + o(n^{4/3})$.
Let $\lines_1$ be the set of lines of $\lines$ that are incident to at most $c_1n^{1/3}/4$ points of $\pts$.
Then
\begin{equation} \label{eq:PoorLinesInc} 
I(\pts,\lines_1) \le \frac{c_1n^{1/3}}{4}\cdot |\lines_1| \le \frac{c_1n^{4/3}}{4}. 
\end{equation}

Let $c_2$ be the constant that is hidden by the $O(\cdot)$-notation in the bound of Theorem \ref{th:DualSzemTrot}.
Let $\lines_2$ be the set of lines of $\lines$ that are incident to at least $(10c_2n)^{1/3}/c_1^{1/2}$ points of $\pts$.
Theorem \ref{th:DualSzemTrot} implies that
\[ |\lines_2| \le \frac{c_2n^2}{10c_2n/c_1^{3/2}} + \frac{c_2n}{(10c_2n)^{1/3}/c_1^{1/2}} = \frac{c_1^{3/2}n}{10} + \frac{c_1^{1/2}c_2^{2/3}n^{2/3}}{10^{1/3}}.\]
When $n$ is sufficiently large, we may assume that 
\[ |\lines_2| < \frac{c_1^{3/2}n}{8}. \]

Ackerman \cite{Ackerman2019} proved that the $O(\cdot)$-notation in the bound of Theorem \ref{th:SzemTrot} may be replaced with the constant $2.44$.
This leads to 
\begin{equation} \label{eq:RichLinesInc} 
I(\pts,\lines_2) \le 2.44\cdot (|\pts|^{2/3}|\lines_2|^{2/3}+|\pts|+|\lines_2|) < \frac{2c_1n^{4/3}}{3} +O(n).
\end{equation}
 
We set $\lines' = \lines \setminus (\lines_1\cup\lines_2)$ and note every line of $\lines'$ is incident to $\Theta(n^{1/3})$ points of $\pts$. 
Combining \eqref{eq:PoorLinesInc} and \eqref{eq:RichLinesInc} leads to
\[ I(\pts,\lines') = I(\pts,\lines) - I(\pts,\lines_1) - I(\pts,\lines_2)> c_1n^{4/3}/12-o(n^{4/3}).\] 
 
Since every line of $\lines'$ is incident to $\Theta(n^{1/3})$ points of $\pts$, we have that 
\[ |\lines'|=\Theta(I(\pts,\lines')/n^{1/3}) = \Theta(n). \]
\end{proof}

We are now ready to prove Theorem \ref{th:StructuralSzemTrot}.
We first recall the statement of this result.
\vspace{2mm}

\noindent {\bf Theorem \ref{th:StructuralSzemTrot}.} $\qquad$\\
\emph{(a) For $1/3<\alpha< 1/2$, let $A,B\subset \RR$ satisfy $|A|=n^\alpha$ and $|B|=n^{1-\alpha}$.
Let $\lines$ be a set of $n$ lines in $\RR^2$, such that $I(A\times B,\lines) = \Theta(n^{4/3})$.
Then at least one of the following holds:
\begin{itemize}
\item There exists $1-2\alpha\le \beta \le 2/3$ such that $\lines$ contains $\Omega(n^{1-\beta}/\log n)$ families of $\Theta(n^\beta)$ parallel lines, each with a different slope.
\item There exists $1-\alpha\le \gamma \le 2/3$ such that $\lines$ contains $\Omega(n^{1-\gamma}/\log n)$ disjoint families of $\Theta(n^\gamma)$ concurrent lines.
\end{itemize}
(b) Assume that we are in the case of $\Omega(n^{1-\beta}/\log n)$ families of $\Theta(n^\beta)$ parallel lines. 
There exists $n^{2\beta} \le t \le n^{3\beta}$ such that, for $\Omega(n^{1-\beta}/\log^2 n)$ of these families, the additive energy of the $y$-intercepts is $\Theta(t)$.
Let $S$ be the set of slopes of these families. Then}
\[ E^{\times} (S) \cdot t = \Omega(n^{3-\alpha}/ \log^{12}n). \]

\begin{proof}
We remove from $\lines$ every line that is not incident to $\Theta(n^{1/3})$ points of $A\times B$.
By Lemma \ref{le:ThirdRichness}, this does not change the asymptotic size of $\lines$ and of $I(A\times B,\lines)$.
By definition, $\lines$ contains at most $n^\alpha+n^{1-\alpha}$ axis-parallel lines. 
These axis-parallel lines form at most $2n$ incidences with the points of $A\times B$.
We may thus discard these lines from $\lines$, without changing the asymptotic size of $|\lines|$ and $I(A\times B,\lines)$.
We write $I(A\times B,\lines)=cn^{4/3}+o(n^{4/3})$ for some positive $c\in \RR$.

\parag{An iterative process.}
We repeat the following process until $I(A\times B,\lines)< cn^{4/3}/2$.
Combining Theorem \ref{th:IncidencesViaLineEnergy} with the assumption $I(A\times B,\lines) = \Theta(n^{4/3})$ leads to
\begin{equation} \label{eq:n43VSenergy} 
n^{4/3} = O(|B|^{1/2}|A|^{2/3}E(\lines)^{1/6}|\lines|^{1/3} + |B|^{1/2}|\lines|). 
\end{equation}

Since $|B|=n^{1-\alpha}=o(n^{2/3})$, we get that $|B|^{1/2}|\lines|=o(n^{4/3})$. 
In other words, the right-hand side of \eqref{eq:n43VSenergy} is dominated by the first term.
We rewrite \eqref{eq:n43VSenergy} as
\[ n^{4/3} = O(n^{(1-\alpha)/2}n^{2\alpha/3}E(\lines)^{1/6}n^{1/3}) = O(n^{(5+\alpha)/6}E(\lines)^{1/6}).  \]
Rearranging gives
\[ E(\lines)^{1/6} = \Omega(n^{1/2-\alpha/6}),\]
which implies that
\begin{equation}\label{eq:StructureLinesLower}
 E(\lines) = \Omega(n^{3-\alpha}). 
\end{equation}

We imitate the notation of Theorem \ref{th:PrevEnergyBounds}: Let $M$ be the maximum number of lines of $\lines$ that are incident to the same point. Let $m$ be the maximum number of lines of $\lines$ that have the same slope.
Theorem \ref{th:PrevEnergyBounds} implies that 
\begin{equation}\label{eq:StructureLinesUpper}
E(\lines) = O\left(m^{1/2}n^{5/2} + Mn^2\right).
\end{equation}

Combining \eqref{eq:StructureLinesLower} and \eqref{eq:StructureLinesUpper} leads to 
\[ n^{3-\alpha} = O\left(m^{1/2}n^{5/2} + Mn^2\right). \]
Thus, we have that $m=\Omega(n^{1-2\alpha})$, or that $M=\Omega(n^{1-\alpha})$, or both.
When $m=\Omega(n^{1-2\alpha})$, there exists a set of $\Omega(n^{1-2\alpha})$ parallel lines of $\lines$.
When $M=\Omega(n^{1-\alpha})$, there exists a set of $\Omega(n^{1-\alpha})$ concurrent lines of $\lines$.
In either case, we remove those lines from $\lines$.
If we still have that $I(A\times B,\lines)\ge cn^{4/3}/2$, then we begin another iteration of the above process.

\parag{Studying the removed lines.}
Let $\lines'$ be the set of lines of $\lines$ that were removed in the iterative process above.
By definition, we have that $I(A\times B,\lines')> cn^{4/3}/2$.
Every line of $\lines'$ was removed as part of a family of $\Omega(n^{1-2\alpha})$ parallel lines or as part of a family of $\Omega(n^{1-\alpha})$ concurrent lines.
Let $\lines'_{\text{par}}$ be the set of the former lines and let $\lines'_{\text{con}}$ be the set of the latter lines.
We note that $I(A\times B,\lines'_{\text{par}})> cn^{4/3}/4$ or that $I(A\times B,\lines'_{\text{con}})> cn^{4/3}/4$ (or both).

We first assume that $I(A\times B,\lines'_{\text{par}})> cn^{4/3}/4$.
For $\log_2 n^{1-2\beta}\le j\le \log_2 n$, let $\lines'_{\text{par},j}$ be the set of lines that joined $\lines'$ as part of a family of at least $2^j$ parallel lines, and of fewer than $2^{j+1}$ such lines.
By the pigeonhole principle, there exists a $j$ for which $I(A\times B,\lines'_{\text{par},j}) = \Omega(n^{4/3}/\log n)$.
We denote one such $\lines'_{\text{par},j}$ as $\lines''$.
Since every line is incident to $\Theta(n^{1/3})$ points of $A\times B$, we have that $|\lines''|=\Omega(n/\log n)$.
By writing $\beta = \log_n 2^j$, we get that there exist $\Omega(n^{1-\beta}/\log n)$ families of $\Theta(n^\beta)$ parallel lines in $\lines$.  

Consider a family of $\Theta(n^\beta)$ parallel lines from $\lines''$.
Since these lines are parallel, every point of $A\times B$ is incident to at most one line.
Thus, the family forms at most $n$ incidences with $A\times B$.
Since $\lines''$ consists of $O(n^{1-\beta})$ such families, we get that $I(A \times B,\lines'')=O(n^{2-\beta})$.
Since $I(A\times B,\lines'')=\Omega(n^{4/3}/\log n)$, we conclude that $\beta\le 2/3$.

Next, we assume that $I(A\times B,\lines'_{\text{par}})\le cn^{4/3}/4$.
In this case, $I(A\times B,\lines'_{\text{con}})> cn^{4/3}/4$.
By repeating the argument from the parallel case, we obtain $1-\alpha\le \gamma \le 1$ such that lines from families of $\Theta(n^\gamma)$ concurrent lines form $\Omega(n^{4/3}/\log n)$ incidences.
Let $\lines''$ be the set of lines of $\lines'_{\text{con}}$ that belong to such a family. 
Since every line of $\lines''$ is incident to $\Theta(n^{1/3})$ points of $A\times B$, we have that $|\lines''|=\Omega(n/\log n)$.

Consider a family of $\Theta(n^\gamma)$ concurrent lines from $\lines''$.
Excluding the point of intersection, every point of $A\times B$ is incident to at most one such line.
Thus, the family forms at most $n+\Theta(n^\gamma)$ incidences.
Since $\lines''$ consists of $O(n^{1-\gamma})$ such families, we get that $I(A \times B,\lines'')=O(n^{2-\gamma})$.
Since $I(A\times B,\lines'')=\Omega(n^{4/3}/\log n)$, we conclude that $\gamma\le 2/3$.

\parag{Proof of part (b).}
In this part, we assume that $\lines''$ consists of $\Omega(n^{1-\beta}/\log n)$ families of $\Theta(n^\beta)$ parallel lines. 
Let $S''$ be the set of slopes of the lines in $\lines''$.  
Then $|S''|=\Omega(n^{1-\beta}/\log n)$ and $|S''|=O(n^{1-\beta})$.
For $s\in S''$, we set $Y_{\lines''}^s$ to be the set of $y$-intercepts of the lines with slope $s$ in $\lines''$.

We recall that $I(A\times B,\lines'')=\Omega(n^{4/3}/\log n)$.
By definition, for every $s\in S''$, we have that $|Y_{\lines''}^s|=\Theta(n^\beta)$. 
This implies that $E^+(Y_{\lines''}^s) =\Omega(n^{2\beta})$ and $E^+(Y_{\lines''}^s) =O(n^{3\beta})$.
For $n^{2\beta}\le t\le n^{3\beta}$, we define
\[ \lines_t = \{\ell \in \lines''\ :\ E^+( Y_{\lines''}^s) =\Theta(t)\}. \]
By another dyadic pigeonhole argument, there exists a $t$ that satisfies $I(A\times B,\lines_t) = \Omega(n^{4/3}/\log^2 n)$.
We fix such a value of $t$. 
Since every line is incident to $\Theta(n^{1/3})$ points of $A\times B$, we get that $|\lines_t|=\Omega(n/\log^2 n)$.

Let $S$ be the set of slopes of lines of $\lines_t$.
We have that 
\[ |S| \le |S''| =O(n^{1-\beta}) \quad \text{ and that } \quad |S|=\Omega(n^{1-\beta}/\log^2 n). \] 

To obtain an upper bound for $E(\lines_t)$, we imitate the proof of Theorem \ref{th:LineCartesianEnergyUpper}.
While $\lines_t$ may not be a Cartesian product of lines, we can instead rely on the property $E^+( Y_{\lines''}^s) =\Theta(t)$.
We think of $E(\lines_t)$ as the number of solutions to \eqref{eq:LineQuadrupleCond1} and \eqref{eq:LineQuadrupleCond2}.

Equation \eqref{eq:LineQuadrupleCond1} depends only on the elements of $S$.
The number of distinct solutions to \eqref{eq:LineQuadrupleCond1} from $S$ is $\Theta(E^\times(S))$.
This does not take into account the number of lines that have the same slope. 
We fix a solution to \eqref{eq:LineQuadrupleCond1} and consider the number of corresponding solutions to \eqref{eq:LineQuadrupleCond2}.
In other words, we fix the slopes $c_1,c_2,c_3,c_4\in S$ and consider valid values for the $y$-intercepts $d_1,d_2,d_3,d_4$.
We set $s = c_1/c_3$ and rephrase \eqref{eq:LineQuadrupleCond2} as 
\begin{equation} \label{eq:LineQuadrupleCond2structure} 
d_2-d_1 = s(d_4-d_3).
\end{equation}

Similarly to $r_A^-(x)$, for sets $A$ and $B$ we define 
\begin{align*} 
r_{A,B}^-(x) &= |\{(a,b)\in A\times B\ :\ a-b=x\}|.
\end{align*}

By the Cauchy-Schwarz inequality, the number of solutions to \eqref{eq:LineQuadrupleCond2structure} is 
\begin{align*} 
\sum_{k\in \RR}r_{Y_{\lines_t}^{c_1},Y_{\lines_t}^{c_2}}^-(k) r_{Y_{\lines_t}^{c_3},Y_{\lines_t}^{c_4}}^-(k/s) &\le \left(\sum_{k\in \RR}r_{Y_{\lines_t}^{c_1},Y_{\lines_t}^{c_2}}^-(k)^2\right)^{1/2} \left(\sum_{k\in \RR}r_{Y_{\lines_t}^{c_3},Y_{\lines_t}^{c_4}}^-(k)\right)^{1/2} \\[2mm]
&\hspace{40mm} = E^+(Y_{\lines_t}^{c_1},Y_{\lines_t}^{c_2})^{1/2} \cdot E^+(Y_{\lines_t}^{c_3},Y_{\lines_t}^{c_4})^{1/2}.
\end{align*}
Combining this with \eqref{eq:BipartiteEnergy} implies that the number of solutions to \eqref{eq:LineQuadrupleCond2structure} is at most
\[ E^+(Y_{\lines_t}^{c_1})^{1/4}\cdot E^+(Y_{\lines_t}^{c_2})^{1/4}\cdot E^+(Y_{\lines_t}^{c_3})^{1/4}\cdot E^+(Y_{\lines_t}^{c_4})^{1/4}. \]
By definition, each of these four energies is $\Theta(t)$.
We conclude that the number of solutions to \eqref{eq:LineQuadrupleCond2structure} is $O(t)$.

To recap, there are $\Theta(E^\times(S))$ solutions to \eqref{eq:LineQuadrupleCond1}, each with $O(t)$ corresponding solutions to \eqref{eq:LineQuadrupleCond2structure}.
We thus have that
\begin{equation} \label{eq:EnergyLijUpper}
    E(\lines_t) = O(E^{\times} (S) \cdot t).
\end{equation}

Combining Theorem \ref{th:IncidencesViaLineEnergy} with $I(\pts,\lines_t)=\Omega(n^{4/3}/\log^2 n)$ leads to
\begin{equation} \label{eq:n43VSenergylog2} 
n^{4/3}/ \log^2(n) = O(|B|^{1/2}|A|^{2/3}E(\lines_t)^{1/6}|\lines_t|^{1/3} + |B|^{1/2}|\lines_t|). 
\end{equation}

Since $|B|=n^{1-\alpha}$ and $\alpha>1/3$, we have that $|B|^{1/2}|\lines_t|=o(n^{4/3}/ \log^2n)$. 
This implies that the right-hand side of \eqref{eq:n43VSenergylog2} is dominated by the first term.
We rewrite \eqref{eq:n43VSenergylog2} as
\[ n^{4/3}/ \log^2n = O(n^{(1-\alpha)/2}n^{2\alpha/3}E(\lines_t)^{1/6}n^{1/3}) = O(n^{(5+\alpha)/6}E(\lines_t)^{1/6}).  \]
Rearranging gives
\[ E(\lines_t)^{1/6} = \Omega(n^{1/2-\alpha/6}/ \log^2n),\]
which implies that
\begin{equation*}
 E(\lines_t) = \Omega(n^{3-\alpha}/ \log^{12}n). 
\end{equation*}

Combining the bound above with \eqref{eq:EnergyLijUpper} leads to 
\[ E^{\times} (S) \cdot t = \Omega(n^{3-\alpha}/ \log^{12}n). \]
\end{proof}

\section{Elekes's structural problem} \label{sec:ElekStructural}

This concluding section contains the proof of Theorem \ref{th:ElekesStructural}.
We first recall the statement of this result.
\vspace{2mm}

\noindent {\bf Theorem \ref{th:ElekesStructural}.}
\emph{Let $A$ be a set of $n$ reals and let $\alpha<1$ satisfy $\alpha = \Omega(n^{-1/2}\log^2 k)$.
Let $\lines$ be a set of $k$ non-axis-parallel lines, each incident to at least $\alpha n$ points of $A\times A$.
Consider $0\le\beta\le 1$ such that $\lines$ contains $\Omega(k^{1-\beta}/\log k)$ families of $\Theta(k^\beta)$ parallel lines.
There exists $k^{2\beta} \le t \le k^{3\beta}$ such that, for $\Omega(k^{1-\beta}/\log^2 k)$ of these families, the additive energy of the $y$-intercepts is $\Theta(t)$.
Let $S$ be the set of slopes of these families. 
Then }
\[ k = O\left(\frac{n^{1/4}t^{1/4}E^\times(S)^{1/4}\log^{3} n}{\alpha^{3/2}}\right). \]

\begin{proof}
We imitate the proof of Theorem \ref{th:StructuralSzemTrot}(b).
Let $\lines'$ be the set of lines that belong to the $\Omega(k^{1-\beta}/\log n)$ families of $\Theta(k^\beta)$ parallel lines. 
Let $S'$ be the set of slopes of the lines of $\lines'$.  
Then $|S'|=\Omega(k^{1-\beta}/\log n)$ and $|S'|=O(k^{1-\beta})$.
For $s\in S'$, we denote as $Y_{\lines'}^s$ the set of $y$-intercepts of the lines of $\lines'$ with slope $s$.

By definition, for every $s\in S'$, we have that $|Y_{\lines'}^s|=\Theta(k^\beta)$. 
This implies that $E^+(Y_{\lines'}^s) =\Omega(k^{2\beta})$ and that $E^+(Y_{\lines'}^s) =O(k^{3\beta})$.
For $n^{2\beta}\le t\le n^{3\beta}$, we define
\[ \lines_t = \{\ell \in \lines'\ :\ E^+( Y_{\lines'}^s) =\Theta(t)\}. \]
By the pigeonhole principle, there exists $t$ that satisfies $|\lines_t| = \Omega(k/\log^2 k)$.
We fix such a value of $t$.

Since every line of $\lines$ is incident to at least $\alpha n$ points of $A\times A$, we get that
\begin{equation*}
I(A\times A,\lines_t) =\Omega(\alpha n k/\log^2 k). 
\end{equation*}

Theorem \ref{th:IncidencesViaLineEnergy} implies that
\[ I(A\times A,\lines_t)= O(n^{7/6}E(\lines_t)^{1/6}|\lines_t|^{1/3}+|\lines_t|n^{1/2}) = O(n^{7/6}E(\lines_t)^{1/6}k^{1/3}+kn^{1/2}).\] 
Combining the two above bounds for $I(A\times A,\lines_t)$ leads to
\[ \alpha n k/\log^2 k =O(n^{7/6}E(\lines_t)^{1/6}k^{1/3}+kn^{1/2}). \]

Since $\alpha = \Omega(n^{-1/2} \log^2 k)$, we have that
\begin{equation} \label{eq:EnergyLowerAddMultEnergyElekStruct} 
\alpha n k/\log^2 k =O(n^{7/6}E(\lines_t)^{1/6}k^{1/3}). 
\end{equation}

By repeating the argument from the proof of Theorem \ref{th:StructuralSzemTrot}(b), we obtain that 
\[ E(\lines_t) = O(t\cdot  E^\times(S)). \]
Combining this with \eqref{eq:EnergyLowerAddMultEnergyElekStruct} and noting that $\log k = O(\log n)$ leads to
\begin{align*} 
\alpha n k =O(n^{7/6}E(\lines_t)^{1/6}k^{1/3}\log^2 k) &= O(n^{7/6}k^{1/3} t^{1/6}E^\times(S)^{1/6}\log^2 n).
\end{align*}
Rearranging leads to
\[ k = O\left(\frac{n^{1/4}t^{1/4}E^\times(S)^{1/4}\log^{3} n}{\alpha^{3/2}}\right).\]
\end{proof}

\bibliographystyle{plain}

\bibliography{LineEnergy}

\end{document}